\newcommand{\trianglecolored}[1]{\protect\tikz{\protect\filldraw[draw=#1,fill=#1] (0,0) --
			(0.6em,0) -- (0.3em,0.6em);}}
\newtheorem{theorem}{Theorem}[section]
\newtheorem{definition}[theorem]{Definition}
\newtheorem{lemma}[theorem]{Lemma}
\newtheorem{notation}[theorem]{Notation}
\newtheorem{proposition}[theorem]{Proposition}
\newtheorem{remark}[theorem]{Remark}
\newenvironment{proof}[1][Proof]{\noindent\textbf{#1.} }{\ \rule{0.5em}{0.5em}}
\newcommand{\Nin}{\mathcal{N}_{\operatorname{dof}}}
\newcommand{\Nout}{\mathcal{N}_{\operatorname{test}}}
\newcommand{\N}{\mathcal{N}}
\renewcommand{\Re}{\operatorname*{Re}}
\newcommand{\alg}{MOTZ}
\newcommand{\tabcell}[1]{\begin{tabular}{@{}c@{}}#1\end{tabular}}
\begin{document}

\title{Solvability of Discrete Helmholtz Equations}

\author{
Maximilian Bernkopf
	\thanks{\href{mailto:maximilian.bernkopf@asc.tuwien.ac.at}{maximilian.bernkopf@asc.tuwien.ac.at},
	 
		Institute
		for Analysis and Scientific Computing, TU Wien, Wiedner Hauptstr.
		8-10,
		A-1040
		Vienna, Austria}
\and
Stefan Sauter \thanks{\href{mailto:stas@math.uzh.ch}{stas@math.uzh.ch}, 
Institut f\"{u}r Mathematik,
	Universit\"{a}t Z\"{u}rich, Winterthurerstr. 190, CH-8057 Z\"{u}rich,
	Switzerland}
\and
C\'{e}line 	Torres
\thanks{\href{mailto:celine.torres@math.uzh.ch}{celine.torres@math.uzh.ch}, 
Institut f\"{u}r Mathematik,
		Universit\"{a}t Z\"{u}rich, Winterthurerstr. 190, CH-8057 Z\"{u}rich,
		Switzerland},
\and	
Alexander Veit
	\thanks{\href{mailto:alexander_veit@hms.harvard.edu}{alexander\_veit@hms.harvard.edu},
		Department of
		Biomedical Informatics, Harvard Medical School, 25 Shattuck St,
		Boston,
		MA
		02115, USA}
	}
\maketitle

\begin{abstract}
{We study the unique solvability of the discretized Helmholtz problem with
Robin boundary conditions using a conforming Galerkin $hp$-finite element
method. Well-posedness of the discrete equations is typically investigated by
applying a compact perturbation argument to the continuous Helmholtz problem so that a
\textquotedblleft sufficiently rich\textquotedblright\ discretization results
in a \textquotedblleft sufficiently small\textquotedblright\ perturbation of
the continuous problem and well-posedness is inherited via Fredholm's
alternative. The qualitative notion \textquotedblleft sufficiently
rich\textquotedblright, however,\ involves unknown constants and is only of
asymptotic nature.

Our paper is focussed on a fully discrete approach by mimicking the tools for
proving well-posedness of the continuous problem \textit{directly} on the
discrete level. In this way, a computable criterion is derived which certifies
discrete well-posedness without relying on an asymptotic perturbation
argument. By using this novel approach we obtain a) new existence and
uniqueness results for
the $hp$-FEM for the Helmholtz problem b) examples for meshes such that the
discretization becomes unstable (stiffness matrix is singular), and c) a simple
checking Algorithm MOTZ \textquotedblleft
marching-of-the-zeros\textquotedblright\ which guarantees in an a posteriori
way that a given mesh is certified for a well-posed
Helmholtz
discretization.}
{Helmholtz equation at high wave number; adaptive mesh
generation; pre-asymptotic stability; hp-finite elements; a posteriori
stability.}
\end{abstract}

\section{Introduction}

In this paper, we consider the numerical discretization of the Helmholtz
problem for modelling acoustic wave propagation in a bounded Lipschitz
domain $\Omega \subset \mathbb{R}^{d}$, $d=1,2$, with boundary $\Gamma
:=\partial \Omega $. Robin boundary conditions are imposed on $\Gamma $ and
the strong form is given by seeking \(u\) s.t.
\begin{equation*}
\begin{array}{cc}
-\Delta u-k^{2}u=f & \text{in }\Omega , \\
\dfrac{\partial u}{\partial \mathbf{n}}-\operatorname{i}ku=g & \text{on
}\Gamma .%
\end{array}%
\end{equation*}%
Here, $\mathbf{n}$ denotes the outer normal vector and $k\in \mathbb{R}%
\backslash \left\{ 0\right\} $ is the wave number. It is well known that the
weak formulation of this problem is well posed; the proof is based on
Fredholm's alternative in combination with the unique continuation principle
(u.c.p.) (see, e.g., \cite{Leis86}).
The restriction to Robin boundary conditions is only to fix the 
ideas. Our
method and theory apply verbatim to any other boundary condition of the type%
\begin{equation*}
\dfrac{\partial u}{\partial \mathbf{n}}-\operatorname{i}T_{k}u=g\quad \text{on
}%
\Gamma
\end{equation*}%
for some dissipative linear operator $T_{k}$. The
generalization to mixed
boundary condition which are imposed only on a subset of $\Gamma$
with positive surface measure is straightforward as well: only the
initialisation of Algorithm 1 (see \S \ref{sec:ducp}) has to be restricted to
the
degrees of freedom which lie on the boundary part with mixed boundary
conditions.

We consider the discretization of this equation (in variational form) by a
conforming Galerkin method. The proof of well-posedness for this
discretization goes back to \cite{Schatz74} and is based on a perturbation
argument: the subspace which defines the Galerkin discretization has to be
sufficiently \textquotedblleft rich\textquotedblright\ in the sense that a
certain adjoint approximation property holds. However, this adjoint
approximation property contains a constant which is a priori unknown. The
existing analysis gives insights into how the parameters defining the
Galerkin space should be chosen \textit{asymptotically} but does not answer
the question whether, for a concrete finite dimensional space, the
corresponding Galerkin discretization has a unique solution.

In one spatial dimension on quasi-uniform grids the condition
$kh\leq
C_{\operatorname*{res}}$, for some sufficiently small \textit{minimal
resolution constant }$C_{\operatorname*{res}}=O\left(  1\right) $, ensures
unique solvability of the Galerkin discretization. This result is proved for
linear finite elements on uniform meshes with $C_{\operatorname*{res}}=1$ in
\cite[Thm.\ 4]{Ihlenburg} and for the $hp$ version of the finite
element method
on uniform 1D meshes with $C_{\operatorname*{res}}<\pi$ in \cite[Thm.~3.3]%
{Frankp},
i.e., on uniform grids employing $hp$ finite elements the
Galerkin discretization has a unique solution if $kh \leq C_{\operatorname*{res}}<\pi$.
On the other hand, \textit{piecewise linear} finite elements in two
or more spatial dimensions on a star-shaped domain with smooth boundary or a convex polygonal domain,
a straightforward application of the Schatz' perturbation
argument leads to the much more restrictive condition: $k^{2}h\leq
C_{\operatorname*{res}}\ $for $C_{\operatorname*{res}}$ sufficiently small.
For piecewise linear elements in spatial dimension two or three this result
can be improved: if the computational domain $\Omega$ is a convex
polygon/polyhedron, the condition \textquotedblleft$k^{3}h^{2}\leq
C_{\operatorname*{res}}$ for sufficiently small $C_{\operatorname*{res}%
}=O\left(  1\right)  $\textquotedblright\ ensures
existence and uniqueness (see
\cite[Thm.~6.1]{wu14}). For the $hp$-finite element method on an analytically bounded or convex polygonal domain,
the analysis in
\cite{MelenkSauterMathComp}, \cite{mm_stas_helm2} leads to the condition
\textquotedblleft$kh/p\leq C_{\operatorname*{res}}$ for
$C_{\operatorname*{res}}$ sufficiently small provided the polynomial degree
satisfies $p\gtrsim\log k$\textquotedblright.

Since no sharp bounds for the resolution constant $C_{\operatorname*{res}}$
are available for general conforming finite element meshes in 2D and 3D such
estimates have merely qualitative and asymptotic character. This drawback was
the motivation for the development of many novel discretization techniques by
either modifying the original sesquilinear form or employing a discontinuous
Galerkin discretization. Such discretizations have in common that unique
solvability of the discrete problem does not rely on the Schatz argument.
Unique solvability can therefore be established under less
restrictive
conditions; we mention \cite{burmann-wu-zhu16, wu14} for an analysis of a
continuous
interior penalty method with piecewise linear elements.
In \cite[Cor.~3.5]{wu14} unique solvability is established
for general polygonal/polyhedral domains for any $k>0$ and $h>0$.
In \cite{feng-wu11} interior penalty $hp$-DG methods are analysed and
unique solvability for polygonal/polyhedral star-shaped domains is shown
for any $k>0$ and $h>0$ under certain conditions on the penalty parameters,
see \cite[Thm~3.2]{feng-wu11} for details.
Finally, in \cite{chen-qiu17} a least-squares approach is analysed,
establishing unique solvability for domains, for which a-priori
estimates of the continuous problem are available, see
\cite[Thm~2.4]{chen-qiu17}.

We do not go into the details of such methods because our focus in this paper
is on the question whether the conforming Galerkin discretization of the
Helmholtz problem with Robin boundary conditions can lead to a system matrix
which is singular and how to define a computable criterion to guarantee that
for a given mesh the conforming Galerkin discretization is unique. Such an
approach can be also viewed as a \textit{novel a posteriori strategy}: the
goal is not to improve accuracy but to guarantee unique solvability without
relying on a resolution condition for a conforming Galerkin discretization.

The given finite element mesh is the input of our new algorithm called MOTZ
(\textquotedblleft marching-of-the-zeros\textquotedblright{}) which analyses
the mesh, based on a stability criterion which we will develop in this paper.
If the result is \textquotedblleft certified\textquotedblright\ then the
piecewise linear, conforming finite element discretization of the Helmholtz
problem with Robin boundary conditions leads to a regular system matrix.
Otherwise, the triangulation ist \textit{marked} by MOTZ as
\textquotedblleft critical\textquotedblright\ and we will present a \textit{
	local mesh modification algorithm} \textquotedblleft
MOTZ\_flip\textquotedblright\ with the \textit{goal} to obtain a modified mesh
which is certified by MOTZ. In this way, MOTZ can be regarded as
an a posteriori\textit{ stability indicator}. In contrast there
exist a
posteriori\textit{ error estimators} for the Helmholtz problem in
the literature \cite{doerfler_sauter}, \cite{sauter_zech},
\cite{Irimie_Helm_apost}, \cite{chaumontfrelet_apost} which take as input a
computed discrete solution and estimates the arising error in order to mark
(in an ideal situation) those elements which contain the largest error
contributions. However, all these error estimators are fully reliable and
efficient only if a resolution condition is satisfied and the discrete system
is well posed. They fail if the discrete solution does not exist and differ
from our approach with respect to their goal (approximability in contrast to
well-posedness). Also the adaptive algorithm in \cite{Bespalov2019} is
essentially of this \textit{error estimator} type
-- however has the feature that the mesh is refined uniformly if the discrete
solution does not exist. To the best of our knowledge, our approach is the
first one which does not assume such conditions to hold and refines adaptively
for the goal to improve stability.

The paper is organized as follows. In Section \ref{SecSetting}, we formulate
the Helmholtz problem in a variational setting and recall the relevant
existence and uniqueness results. Then, the conforming Galerkin discretization
by $hp$-finite elements is introduced; by using a standard nodal basis the
discrete problem is formulated as a matrix equation of the form $\mathbf{A}%
_{k}\mathbf{u}=\mathbf{r}$. Since the resulting system is finite dimensional
it is sufficient to prove uniqueness of the homogeneous problem in order to
get discrete solvability. We formulate this condition and obtain in a
straightforward manner that the discrete homogeneous solution must vanish on
the boundary $\Gamma$.

In Section \ref{KkSec} we discuss the invertibility of $\mathbf{A}_{k}$ for
different scenarios. First, we prove in the one-dimensional case, i.e., $d=1$,
that $\mathbf{A}_{k}$ is regular for any conforming $hp$-finite element space
without any restrictions on the mesh size and the polynomial degree $p$. In
contrast, we show in Section \ref{subsection:singular_2d_example} that the
matrix $\mathbf{A}_{k}$ can become singular for two-dimensional domains at
certain discrete wave numbers for simplicial/quadrilateral meshes. We present
an example of a regular triangulation of the square domain $\left(
-1,1\right)  ^{2}$ such that the conforming piecewise linear finite element
discretization of the Helmholtz problem with Robin boundary conditions leads
to a singular system matrix $\mathbf{A}_{k}$. This generalizes the example in
\cite[Ex. 3.7]{MPS13} where the finite element space satisfies homogeneous
Dirichlet boundary conditions to the case of Robin boundary conditions. Next,
we discuss conforming finite element discretizations on quadrilateral meshes
and show that for rectangular domains and tensor product quadrilateral meshes
the matrix $\mathbf{A}_{k}$ is always invertible for polynomial degree
$p\in\left\{  1,2,3\right\}  $ by applying a local argument inductively. We
also show that there are mesh configurations where this local argument breaks
down for $p=4$.

Motivated by the results in Section \ref{subsection:singular_2d_example}, we
present in Section \ref{sec:ducp} the aforementioned algorithm MOTZ. For the
case that the outcome is \textquotedblleft{}critical\textquotedblright{} we
also
present two companion algorithms which refine or modify the given mesh such
that the MOTZ algorithm returns \textquotedblleft{}certified\textquotedblright.
The section is complemented by numerical experiments and a short discussion on
the behaviour of the inf-sup constant before and after the
modification of a ``critical'' mesh to a ``certified'' mesh.

\section{Setting\label{SecSetting}}

Let $\Omega\subset\mathbb{R}^{d}$, $d=1,2$ be a bounded Lipschitz domain with
boundary $\Gamma:=\partial\Omega$.
Let $L^{2}\left(  \Omega\right)  $ denote the usual Lebesgue space with scalar
product denoted by $\left(  \cdot,\cdot\right)  $ (complex conjugation is on
the second argument) and norm $\Vert\cdot\Vert_{L^{2}(\Omega)}:=\left\Vert
\cdot\right\Vert :=\left(  \cdot,\cdot\right)  ^{1/2}$. Let $H^{1}\left(
\Omega\right)  $ denote the usual Sobolev space and let $\gamma:H^{1}\left(
\Omega\right)  \rightarrow H^{1/2}\left(  \Gamma\right)  $ be the standard
trace operator. We introduce the sesquilinear forms
\[
a_{0,k}\left(  u,v\right)  :=\left(  \nabla u,\nabla v\right)  -k^{2}\left(
u,v\right)  \qquad\forall u,v\in H^{1}\left(  \Omega\right)
\]
and
\[
b_{k}\left(  u, v\right)  :=-\operatorname*{i}k\langle
u, v\rangle _{\Gamma}\qquad\forall u,v\in H^{1/2}\left( \Gamma\right)  ,
\]
where $\langle \cdot,\cdot\rangle_{\Gamma}$ denotes the $L^{2}$ scalar
product on the boundary $\Gamma$. Throughout this paper we assume that the
wave number $k$ satisfies%
\[
k\in\overset{\circ}{\mathbb{R}}:=\mathbb{R}\backslash\left\{  0\right\}
\text{.}%
\]
The weak formulation of the Helmholtz problem with Robin boundary conditions
is given as follows: For \(f\in L^2(\Omega)\) and \(g \in
H^{1/2}(\Gamma)\), we define $F=(f,\cdot)+\langle
g,\gamma\ \cdot\rangle_{\Gamma}\in(H^{1}\left(
\Omega\right)  )^{\prime}$. We seek $u\in H^{1}\left(  \Omega\right)  $ such
that
\begin{equation}
a_{k}\left(  u,v\right)  :=a_{0,k}\left(  u,v\right)  +b_{k}\left(  \gamma
u,\gamma v\right)  =F\left(  v\right)  \quad\forall v\in H^{1}\left(
\Omega\right)  .\label{varformrobin}%
\end{equation}
In the following, we will omit the trace operator $\gamma$ in the notation of
the sesquilinear form since this is clear from the context. Well-posedness of problem (\ref{varformrobin}) is proved in \cite[Prop.
8.1.3]{MelenkDiss}.

\begin{proposition}
Let $\Omega$ be a bounded Lipschitz domain. Then, (\ref{varformrobin}) is
uniquely solvable for all $F\in\left(  H^{1}\left(  \Omega\right)  \right)
^{\prime}$ and the solution depends continuously on the data.
\end{proposition}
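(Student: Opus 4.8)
The plan is to establish well-posedness of \eqref{varformrobin} via Fredholm's alternative, following the classical route for the Helmholtz equation with Robin boundary conditions. The key structural observation is that the sesquilinear form $a_k$ is a compact perturbation of a coercive form: writing $a_k(u,v) = \widetilde{a}(u,v) - (k^2+1)(u,v) + b_k(u,v)$ where $\widetilde{a}(u,v):=(\nabla u,\nabla v)+(u,v)$ is the $H^1(\Omega)$ inner product, the first term is coercive on $H^1(\Omega)$, and the remaining terms are bounded perturbations that are compact when viewed through the Riesz representation. First I would verify continuity of $a_k$ on $H^1(\Omega)\times H^1(\Omega)$: the volume terms are clearly bounded, and $b_k$ is bounded because the trace operator $\gamma:H^1(\Omega)\to L^2(\Gamma)$ is continuous (indeed, $H^1(\Omega)\to H^{1/2}(\Gamma)\hookrightarrow L^2(\Gamma)$). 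Then I would define the operator $A_k:H^1(\Omega)\to H^1(\Omega)$ via $\widetilde{a}(A_k u,v)=a_k(u,v)$ and write $A_k = I - C_k$, where $C_k$ collects the terms $(k^2+1)(u,v)$ and $-b_k(u,v)$; the operator $C_k$ is compact because $H^1(\Omega)\hookrightarrow L^2(\Omega)$ is compact (Rellich) and $\gamma:H^1(\Omega)\to L^2(\Gamma)$ is compact (a consequence of the compact embedding $H^{1/2}(\Gamma)\hookrightarrow L^2(\Gamma)$ for the boundary of a bounded Lipschitz domain).

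By Fredholm's alternative, existence and continuous dependence then follow from uniqueness, so the core of the argument is to show that the only solution of the homogeneous problem $a_k(u,v)=0$ for all $v\in H^1(\Omega)$ is $u=0$. The standard trick: take $v=u$ and inspect real and imaginary parts. The imaginary part gives $-k\langle u,u\rangle_\Gamma = \operatorname{Im} a_k(u,u) = 0$, hence $\gamma u = 0$ on $\Gamma$ (using $k\neq 0$). But then the Robin condition degenerates and, returning to the strong form, $u$ satisfies $-\Delta u = k^2 u$ in $\Omega$ with $\gamma u = 0$ and $\partial u/\partial\mathbf{n} = 0$ on $\Gamma$ (the latter because $\partial u/\partial\mathbf{n} = \mathrm{i}k\gamma u = 0$). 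Extending $u$ by zero outside $\Omega$ gives a solution of the Helmholtz equation on all of $\mathbb{R}^d$ with compact support, and the unique continuation principle forces $u\equiv 0$.

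The main obstacle is the unique continuation step. For $d=1$ it is elementary — a solution of $u''+k^2 u=0$ with $u(x_0)=u'(x_0)=0$ at a boundary point vanishes identically by uniqueness for linear ODEs. For $d=2$ on a Lipschitz domain one must invoke the u.c.p.\ for second-order elliptic operators (as in \cite{Leis86}); the subtlety is that the zero-extension $\tilde u$ of $u$ is a priori only in $H^1_{\mathrm{loc}}(\mathbb{R}^d)$, and one needs $\gamma u = 0$ together with $\partial u/\partial\mathbf{n}=0$ to guarantee that $\tilde u$ is a genuine $H^1_{\mathrm{loc}}$ weak solution of $-\Delta\tilde u = k^2\tilde u$ across $\Gamma$, which then has an open set (the exterior of $\Omega$) on which it vanishes; the u.c.p.\ then propagates the zero into $\Omega$. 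Once uniqueness is in hand, Fredholm's alternative delivers existence and the bound $\|u\|_{H^1(\Omega)}\le C\|F\|_{(H^1(\Omega))'}$ automatically. I would simply cite \cite[Prop.~8.1.3]{MelenkDiss} for the full details, as the statement indicates.
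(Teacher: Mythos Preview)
Your proposal is correct and follows the classical route (G\aa rding inequality, Fredholm alternative, uniqueness via the imaginary part plus unique continuation). Note, however, that the paper does not prove this proposition at all: it simply cites \cite[Prop.~8.1.3]{MelenkDiss}, which is exactly what you suggest doing at the end of your proposal, so there is nothing further to compare.
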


We employ the conforming Galerkin finite element method for its discretization
(see, e.g., \cite{Ciarlet_orig}, \cite{scottbrenner3}). For the spatial
dimension, we assume
that $\Omega
\subset\mathbb{R}^{d}$ is an interval for $d=1$ or a polygonal domain for
$d=2$. We consider either conforming meshes \(\mathcal{K}_{\mathcal{T}}\)
(i.e., no hanging nodes) composed of
closed simplices or conforming meshes \(\mathcal{K}_{\mathcal{Q}}\)
composed of
quadrilaterals. The set of all
vertices is denoted by
$\mathcal{N}$, i.e., for $\mathcal{M}\in\left\{  \mathcal{T},\mathcal{Q}%
\right\}  $
\[
\mathcal{N}:=\left\{x\in \mathbb{R}^d~\mid\exists
K\in\mathcal{K}_{\mathcal{M}}\text{ with
}x\text{
is a vertex of }K\right\}.
\]
For $d=2$, we denote the set of all edges by $\mathcal{E}$ and
\[
\mathcal{E}_{\Omega}:=\left\{  E\in\mathcal{E}:E\not \subset \Gamma \right\}  .
\]
For $p\in\mathbb{N}$, we define the continuous, piecewise polynomial finite
element space by%
\begin{align*}
&  \text{ for }d\in\{1,2\} &  &  S_{\mathcal{T}}^{p}:=\left\{  u\in
C^{0}\left(  \Omega\right)  \mid\forall K\in\mathcal{K}_{\mathcal{T}}:u_{|{K}%
}\in\mathcal{P}^{p}\right\}  ,\\
&  \text{ for }d=2 &  &  S_{\mathcal{Q}}^{p}:=\left\{  u\in C^{0}\left(
\Omega\right)  \mid\forall K\in\mathcal{K}_{\mathcal{Q}}:u_{|{K}}%
\in\mathcal{Q}^{p}\right\}  ,
\end{align*}
where $\mathcal{P}^{p}$ (resp. $\mathcal{Q}^{p}$) is the space of multivariate
polynomials of maximal total degree $p$ (resp. maximal degree $p$ with respect
to each variable). The reference elements are given by%
\[
\widehat{K}_{\mathcal{T}}:=\left\{  \mathbf{x}=\left(  x_{i}\right)
_{i=1}^{d}\in\mathbb{R}_{\geq0}^{d}:\sum_{i=1}^{d}x_{i}\leq1\right\}  \text{
for }d\in\{1,2\},\qquad\widehat{K}_{\mathcal{Q}}:=[-1,1]^{2}.
\]
For $\mathcal{M}\in\left\{  \mathcal{T},\mathcal{Q}\right\}  $ and
$K\in\mathcal{K}_{\mathcal{M}}$, let $\phi_{K}:\widehat{K}_{\mathcal{M}%
}\rightarrow K$ denote an affine pullback. Moreover for $p\geq1$, we denote by
$\hat{\Sigma}^{p}$ a set of nodal points in $\widehat{K}_{\mathcal{M}}$
unisolvent on the corresponding polynomial space which allow to impose
continuity across faces. The nodal points on $K\in\mathcal{K}_{\mathcal{M}}$
are then given by lifting those of the reference element:%
\[
\Sigma_{K}^{p}:=\left\{  \phi_{K}\left(  z\right)  :z\in\hat{\Sigma}%
^{p}\right\}  .
\]
The set of global nodal points is given by%
\[
\Sigma^{p}:={\bigcup\nolimits_{K\in\mathcal{K}_{\mathcal{M}}}}\Sigma_{K}^{p},
\]
and we denote by $\left(  b_{z,p}\right)  _{z\in\Sigma^{p}}\subset
S_{\mathcal{M}}^{p}$ the standard Lagrangian basis.
\begin{remark}
\label{rmk:notation}For $\mathcal{M}\in\left\{  \mathcal{T},\mathcal{Q}%
\right\}  $, we write $S$ or $S_{\mathcal{M}}$ short for $S_{\mathcal{M}}^{p}$
and $\Sigma$ short for $\Sigma^{p}$ if no confusion is possible. If $p=1$ then
the two sets $\mathcal{N}$ and $\Sigma^{1}$ are equal and we use
the notation $\mathcal{N}$ for the set of degrees of freedom.
\end{remark}
The Galerkin finite element method for the discretization of
(\ref{varformrobin}) is given by:%
\begin{equation}
\text{find }u_{S}\in S\text{ such that }a_{k}\left(  u_{S},v\right)  =F\left(
v\right)  ,\quad\forall v\in S\text{.}\label{GalDisc}%
\end{equation}
The basis representation allows us to reformulate (\ref{GalDisc}) as a linear
system of equations. The system matrices $\mathbf{K}=\left(  \alpha
_{y,z}\right)  _{y,z\in\Sigma}$, $\mathbf{M}=\left(  \mu_{y,z}\right)
_{y,z\in\Sigma}$, $\mathbf{B}=\left(  \beta_{y,z}\right)  _{y,z\in\Sigma}$ are
given by%
\[
\alpha_{y,z}:=\left(  \nabla b_{z,p},\nabla b_{y,p}\right)  ,\quad\mu
_{y,z}:=\left(  b_{z,p},b_{y,p}\right)  ,\quad\beta_{y,z}:=\langle
b_{z,p},b_{y,p}\rangle_\Gamma
\]
and the right-hand side vector $\mathbf{r}=\left(  \rho_{z}\right)
_{z\in\Sigma}$ by $\rho_{z}=F\left(  b_{z,p}\right)$. Then, the Galerkin finite
element discretization leads to the
following system of linear equations%
\begin{equation}
\mathbf{A}_{k}\mathbf{u}=\mathbf{r}\quad\text{with\quad}\mathbf{A}%
_{k}:=\mathbf{K}-k^{2}\mathbf{M-}\operatorname*{i}k\mathbf{B} \label{Kk},
\end{equation}
where \(\mathbf{u} = (u_z)_{z\in \Sigma}\).
We start off with some general remarks. It is well known that the sesquilinear
form $a_{k}\left(  \cdot,\cdot\right)  $ satisfies a G\aa rding inequality and
Fredholm's alternative tells us that well-posedness of \eqref{varformrobin}
follows from uniqueness. Similarly, the finite dimensional problem
\eqref{GalDisc} is well posed if the problem%
\begin{equation}
\text{find }u\in S\text{ such that }a_{k}\left(  u,v\right)  =0\quad\forall
v\in S \label{eq:homsol}%
\end{equation}
has only the trivial solution. We note that if we choose $v=u$ and consider the
imaginary part of \eqref{eq:homsol}, we get
\begin{equation}
0=\operatorname{Im}a_{k}\left(  u,u\right)  =-k\left\Vert u\right\Vert
_{\Gamma}^{2}\implies\left.  u\right\vert _{\Gamma}=0. \label{eq:impzero}%
\end{equation}

\section{Regularity of the discrete system matrix
$\mathbf{A}_{k}$}\label{KkSec}
This section covers three different topics concerning the discretization of
the Helmholtz equation. First, in Section \ref{subsection:1d_ucp} we analyse
the conforming Galerkin finite element method with polynomials of degree
$p\geq1$ in spatial dimension one. Next, in Section
\ref{subsection:singular_2d_example}, we present a singular two-dimensional
example for piecewise linear finite elements on a triangular mesh. Finally, in
Section \ref{subsection:quads} we consider structured tensor-product meshes in
spatial dimension two.

\subsection{The one-dimensional case\label{subsection:1d_ucp}}
We prove that the conforming Galerkin finite element method for the
one-dimensional Helmholtz equation is well posed for any $k\in\overset{\circ
}{\mathbb{R}}$.

\begin{theorem}
\label{thm:1d_ucp} Let $\Omega\subset\mathbb{R}$ be a bounded interval and
consider the Galerkin discretization (\ref{GalDisc}) of (\ref{varformrobin})
with conforming finite elements. Then, for any $k\in\overset{\circ
}{\mathbb{R}}$ the matrix $\mathbf{A}_{k}$ in (\ref{Kk}) is regular.
\end{theorem}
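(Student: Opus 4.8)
The plan is to reduce the claim to a statement about ordinary differential equations. By the discussion around \eqref{eq:homsol} and \eqref{eq:impzero}, it suffices to show that the only $u\in S$ with $a_k(u,v)=0$ for all $v\in S$ is $u=0$; moreover we already know $u|_\Gamma=0$, i.e. $u$ vanishes at the two endpoints of the interval $\Omega=(A,B)$. Since $u\in S\subset C^0(\Omega)$ is piecewise polynomial, it is smooth on each element $K\in\mathcal{K}_{\mathcal T}$. First I would test \eqref{eq:homsol} with basis functions supported in the interior of a single element: for $v$ supported in $K$ this gives $\int_K(\nabla u\cdot\nabla v-k^2 uv)=0$, and integration by parts (no boundary term, since $v$ vanishes near $\partial K$) yields $\int_K(-u''-k^2u)v=0$ for all such $v$. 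Because $u''+k^2u$ is itself a polynomial on $K$ and the interior test functions are rich enough to be unisolvent against $\mathcal P^{p-2}$ (or can be argued by density/dimension count), we conclude $-u''-k^2u=0$ pointwise on each element. Hence on every element $u$ is a genuine solution of the Helmholtz ODE $u''+k^2u=0$.

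Next I would upgrade elementwise smoothness to global $C^1$ regularity. Testing \eqref{eq:homsol} with a basis function $b_z$ associated to an interior vertex $z$ (shared by two neighbouring elements $K_-,K_+$) and using the elementwise ODE together with integration by parts on each side shows that the jump of $u'$ across $z$ is zero: the interior integrals cancel via $-u''-k^2u=0$ and what remains is $[\![u']\!](z)\,b_z(z)=0$, forcing $u'$ to be continuous at $z$. Since $u$ is already continuous and now $C^1$ across every interior node, and satisfies $u''+k^2u=0$ on each element, $u$ is a global $C^1$ (indeed $C^\infty$) solution of $u''+k^2u=0$ on all of $(A,B)$. Combined with $u(A)=u(B)=0$ this is an overdetermined ODE problem: the general solution is $u(x)=c_1\cos(kx)+c_2\sin(kx)$, and $u(A)=u(B)=0$ forces either $c_1=c_2=0$ (done) or the Dirichlet eigenvalue condition $\sin(k(B-A))=0$, i.e. $k(B-A)=m\pi$ for some integer $m$, with $u$ a nonzero multiple of $\sin(k(x-A))$.

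The remaining — and only real — obstacle is to rule out this resonant case. Here I would return to the full Robin condition rather than just its imaginary part: a nonzero homogeneous solution $u$ must also satisfy the boundary contribution of $a_k$, whose real part involves $\langle\nabla u\cdot\mathbf n,v\rangle_\Gamma$-type terms. Concretely, testing \eqref{eq:homsol} with a basis function that is nonzero at the endpoint $A$, and using the elementwise ODE and $C^1$-matching at interior nodes, collapses the whole expression to a boundary term proportional to $u'(A)$ (the endpoint normal derivative), so $u'(A)=0$ as well. But $u(A)=u'(A)=0$ with $u''+k^2u=0$ forces $u\equiv0$ by uniqueness for the ODE initial value problem, contradicting $u\neq0$. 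Thus no nontrivial homogeneous solution exists for any $k\in\overset{\circ}{\mathbb R}$, and $\mathbf K_k$ is regular. I would expect the bookkeeping in this last step — correctly identifying which test function isolates the endpoint derivative and checking that all interior integrals cancel — to be where the care is needed; everything else is standard ODE theory and the unisolvence of nodal bases.
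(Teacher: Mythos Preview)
Your argument has a fatal gap in the very first step. You claim that testing with interior basis functions on an element $K$ forces $-u''-k^{2}u=0$ pointwise on $K$, appealing to ``unisolvence against $\mathcal P^{p-2}$''. But $-u''-k^{2}u$ is \emph{not} a polynomial of degree $p-2$: the term $k^{2}u$ has degree $p$, so the residual $-u''-k^{2}u$ lives in $\mathcal P^{p}$, a space of dimension $p+1$. The interior test functions (polynomials of degree $p$ vanishing at both endpoints of $K$) span only a $(p-1)$-dimensional space, which is two dimensions short. For $p=1$ there are no interior test functions at all, so you get no information from this step. More tellingly: the only polynomial solution of $u''+k^{2}u=0$ for $k\neq 0$ is $u\equiv 0$ (compare leading coefficients), so if your step~1 were correct it would finish the proof instantly and the rest of your outline would be unnecessary. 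That is a sign that step~1 is doing all the work --- and it is false.

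Everything downstream (the $C^{1}$-matching, the global ODE, the resonant case) rests on step~1 and therefore collapses. The paper's proof takes a completely different route: it does \emph{not} try to turn the Galerkin solution into a classical ODE solution. Instead it works element by element from the boundary inward, using a Rellich-type test function $v=(1+x)u'\in\mathcal P^{p}_{(0}$ on the reference interval. Integration by parts of the real part of $a_{0,k}(u,v)$ produces a sum of nonnegative terms $\tfrac12\|u'\|^{2}+|u'(1)|^{2}+\tfrac{k^{2}}{2}\|u\|^{2}$, which forces $u\equiv 0$ on the outermost element; the vanishing of $u$ at the new interior endpoint then lets one repeat the argument on the next element. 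The key idea you are missing is this multiplier/Rellich identity, which sidesteps entirely the (impossible) task of identifying a piecewise polynomial with a trigonometric function.
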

\begin{proof}
We assume that $\Omega=\left(  -1,1\right)  $ (the result for general
intervals follows by an affine transformation).
Since the problem (\ref{GalDisc}) is linear and finite dimensional it suffices
to show that the only solution of the homogeneous problem (\ref{eq:homsol}) is
$u=0$ . From (\ref{eq:impzero}) we already know that $u\left(  -1\right)
=u\left(  1\right)  =0$. We assume that the intervals $\mathcal{K}%
_{\mathcal{T}}=\left\{  K_{i}:1\leq i\leq N\right\}  $ are numbered from left
to right, so that $K_{N}=\left[  x_{N-1},1\right]  $. The function
$u_{N}:=\left.  u\right\vert _{K_{N}}$ can be written as%
\[
u_{N}=\sum_{z\in\Sigma_{K_{N}}^{p}\backslash\left\{  1\right\}  }u_{z}%
b_{z,p}.
\]
As test functions, we employ the functions $b_{y,p}$, $y\in\Sigma_{K_{N}}%
^{p}\backslash\left\{  x_{N-1}\right\}  $ and obtain from (\ref{eq:homsol})
\[
\sum_{z\in\Sigma_{K_{N}}^{p}\backslash\left\{  1\right\}  }\left(
\alpha_{y,z}-k^{2}\mu_{y,z}\right)  u_{z}=0\quad\forall y\in\Sigma_{K_{N}}%
^{p}\backslash\left\{  x_{N-1}\right\}  .
\]
This is a $p\times p$ linear system and our goal is to show that it is regular
for all $k\in\overset{\circ}{\mathbb{R}}$ so that $u_{z}=0$ follows for all
$z\in\Sigma_{K_{N}}^{p}\backslash\left\{  1\right\}  $. By an affine
transformation this is equivalent to the following implication
\begin{equation}
\forall \tilde{k}\in\overset{\circ}{\mathbb{R}}~\text{\textquotedblleft find }%
u\in\mathcal{P}_{0)}^{p}\text{ such that }\left(  u^{\prime},v^{\prime
}\right)  -\tilde{k}^{2}\left(  u,v\right)  =0\quad\forall
v\in\mathcal{P}_{(0}%
^{p}\text{\textquotedblright}\implies u=0, \label{uunit}%
\end{equation}
where \(\tilde{k}= 2|K_N|k\) and
\[
\mathcal{P}_{(0}^{p}:=\left\{  v\in\mathcal{P}^{p}\mid v\left(  -1\right)
=0\right\}  \quad\text{and\quad}\mathcal{P}_{0)}^{p}:=\left\{  v\in
\mathcal{P}^{p}\mid v\left(  1\right)  =0\right\}  .
\]
Let $u\in\mathcal{P}^{p}_{0)}$ satisfy the assumption in \eqref{uunit}. For
$\omega_{(0}\left(
x\right)  :=1+x$ we choose $v=\omega_{(0}u^{\prime}\in\mathcal{P}^{p}_{(0}$ as
a test function. We integrate by parts, use the fact
$2\Re(u\overline{u}^{\prime})=(|u|^{2})^{\prime}$,
and employ the endpoint properties of $u$ and $v$ to
obtain%
\begin{align*}
0  &  =\Re\left[  \left(  u^{\prime},\left(  \omega_{(0}u^{\prime}\right)
^{\prime}\right)  -\tilde{k}^{2}\left(  u,\omega_{(0}u^{\prime}\right)  \right]
\\
&  =-\Re\left[  \left(  u^{\prime\prime},\omega_{(0}u^{\prime}\right)
\right]  +\left.  \omega_{(0}\left\vert u^{\prime}\right\vert ^{2}\right\vert
_{-1}^{1}-\tilde{k}^{2}\left(  \frac{1}{2}\left(  \left\vert u\right\vert
^{2}\right)
^{\prime},\omega_{(0}\right) \\
&  =-\left(  \frac{1}{2}\left(  \left\vert u^{\prime}\right\vert ^{2}\right)
^{\prime},\omega_{(0}\right)  +2\left\vert u^{\prime}\left(  1\right)
\right\vert ^{2}+\frac{\tilde{k}^{2}}{2}\left\Vert u\right\Vert ^{2}-\left.
\frac{\tilde{k}^{2}}{2}\left\vert u\right\vert ^{2}\omega_{(0}\right\vert
_{-1}^{1}\\
&  =\frac{1}{2}\left\Vert u^{\prime}\right\Vert ^{2}+\left\vert u^{\prime
}\left(  1\right)  \right\vert ^{2}+\frac{\tilde{k}^{2}}{2}\left\Vert
u\right\Vert
^{2}.
\end{align*}
This implies that $u_{N}=\left.  u\right\vert _{K_{N}}=0$ and we may proceed
to the adjacent interval $K_{N-1}$. Since $u\left(  x_{N-1}\right)  =0$ we
argue as before to obtain $\left.  u\right\vert _{K_{N-1}}=0$. The result
follows by induction.
\end{proof}

\subsection{A singular example in two dimensions}

\label{subsection:singular_2d_example} In this section, we will present an
example which illustrates that Robin boundary conditions are not sufficient to
ensure well-posedness of the Galerkin discretization (although the continuous
problem is well posed). For this, we first introduce the definition of a
weakly acute angle condition on a triangulation.
\begin{definition} Let $d=2$ and \(\mathcal{T}\) a conforming
triangulation of
the domain \(\Omega\). For an edge \(E\in\mathcal{E}_\Omega\), we
	denote by $\tau_{-}$, $\tau_{+}$ the two triangles sharing $E$. Let
	$\alpha_{-}$ and $\alpha_{+}$ denote the angles in $\tau_{-}$, $\tau_{+}$
	which are opposite to $E$. We define the angle
	\[
	\alpha_{E}:=\alpha_{-}+\alpha_{+}.
	\]
	We say that the edge \(E\) satisfies the weakly acute angle condition if
	\(\alpha_E\leq 	\pi\).
\end{definition}
The following observation is key for the proof of the upcoming Lemma
\ref{lem:unstablemesh}, as
well
as for the derivation of the Algorithm MOTZ (\textquotedblleft
marching-of-the-zeros\textquotedblright{}) presented in Section 4.
\begin{lemma} \label{lem:anglecond}
Let \(p=1\), i.e. \(S= S^1_{\mathcal{T}}\). Let
\(E\in\mathcal{E}_\Omega\) be an
inner edge connecting two
	nodes \(z,z^\prime\).
	Then
	\[\left(  \nabla b_{z},\nabla b_{z^{\prime}}\right)  \leq0 \quad \text{if
	and only if}\quad \alpha_{E}\leq\pi\ \text{ (i.e., } E \text{ satisfies
	the weakly acute angle condition)}.\]
\end{lemma}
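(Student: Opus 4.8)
The plan is to compute the stiffness entry $\left(\nabla b_z,\nabla b_{z'}\right)$ explicitly on the two triangles $\tau_-$ and $\tau_+$ sharing the edge $E$, and to recognize the classical formula expressing it in terms of the cotangents of the angles opposite $E$. First I would reduce to a single triangle $\tau$ with vertices $z$, $z'$ and a third vertex $w$; since $b_z$ and $b_{z'}$ are the barycentric (hat) functions, their gradients are constant on $\tau$. On a triangle the gradient of the hat function $b_z$ is the inward normal to the opposite edge, scaled by the reciprocal of the corresponding height; equivalently $\nabla b_z = \frac{1}{2|\tau|}\,\mathbf{n}_z$ where $\mathbf{n}_z$ is the edge vector opposite $z$ rotated by $90^\circ$, and similarly for $b_{z'}$. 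Taking the inner product of these two constant vectors over $\tau$ (which contributes a factor $|\tau|$) and simplifying, I expect to land on the standard identity $\int_\tau \nabla b_z\cdot\nabla b_{z'} = -\tfrac12\cot\gamma_\tau$, where $\gamma_\tau$ is the angle of $\tau$ at the vertex $w$ opposite $E$.

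Granting that identity, the rest is short. Summing over the two triangles gives
\[
\left(\nabla b_z,\nabla b_{z'}\right) = -\frac12\bigl(\cot\alpha_- + \cot\alpha_+\bigr).
\]
Now I would use the addition formula $\cot\alpha_- + \cot\alpha_+ = \dfrac{\sin(\alpha_-+\alpha_+)}{\sin\alpha_-\sin\alpha_+}$, valid since both $\alpha_\pm\in(0,\pi)$ so the denominator is strictly positive. Hence the sign of $\left(\nabla b_z,\nabla b_{z'}\right)$ is exactly the opposite of the sign of $\sin(\alpha_-+\alpha_+) = \sin\alpha_E$. Because $\alpha_E = \alpha_-+\alpha_+ \in (0,2\pi)$, we have $\sin\alpha_E \geq 0$ precisely when $\alpha_E \leq \pi$, with equality iff $\alpha_E=\pi$ (giving $\left(\nabla b_z,\nabla b_{z'}\right)=0$) and strict negativity iff $\alpha_E<\pi$. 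This gives the claimed equivalence, including the boundary case.

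The main obstacle — really the only place requiring care — is establishing the cotangent formula cleanly and fixing signs/orientations correctly, so that the coefficient comes out negative (not positive) and the angle appearing is the one \emph{opposite} $E$ rather than an adjacent one. I would handle this by working on a convenient reference triangle (e.g. vertices $(0,0)$, $(1,0)$, $(0,1)$) where the gradients of the hat functions are immediate, verifying the identity there, and then invoking affine invariance of the quantity $\int_\tau\nabla b_z\cdot\nabla b_{z'}$ together with the fact that the angle at $w$ transforms correctly — or, more robustly, just doing the general two-vector computation directly with the $90^\circ$-rotation description of $\nabla b_z$, since the dot product of the two rotated edge vectors divided by $4|\tau|$ is manifestly $-\tfrac12\cot\gamma_\tau$ once one writes $|\tau| = \tfrac12 |a||b|\sin\gamma_\tau$ with $a,b$ the two edges emanating from $w$. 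A secondary minor point is to note that since $E$ is an interior edge, $\tau_-$ and $\tau_+$ both exist and are the only triangles on which both $b_z$ and $b_{z'}$ are nonzero, so no other terms enter the integral over $\Omega$.
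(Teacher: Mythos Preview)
Your argument is correct and yields a complete, self-contained proof. The cotangent identity $\int_{\tau}\nabla b_z\cdot\nabla b_{z'}=-\tfrac12\cot\gamma_\tau$ together with the addition formula for cotangents gives exactly the sign characterization claimed. One caveat: your first suggested route to the cotangent identity, via ``affine invariance of $\int_\tau\nabla b_z\cdot\nabla b_{z'}$ together with the fact that the angle at $w$ transforms correctly'', does not work as stated --- the stiffness integral is \emph{not} invariant under general affine maps (the gradient picks up a factor $A^{-T}$ and the volume element a factor $|\det A|$, and these do not cancel), nor are angles preserved. Your alternative route, computing directly from $\nabla b_z=\tfrac{1}{2|\tau|}\mathbf{n}_z$ with $\mathbf{n}_z$ the rotated opposite edge and using $|\tau|=\tfrac12|a||b|\sin\gamma_\tau$, is the right one and goes through cleanly.

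By comparison, the paper does not give a proof at all: it simply cites a result in Sauter's diploma thesis and points to Strang--Fix for a sufficient criterion. Your approach is therefore strictly more informative --- you supply the classical elementary derivation via the cotangent formula that the paper outsources to references. What the paper's approach buys is brevity and an acknowledgment that this is a well-known fact; what yours buys is self-containedness and an explicit formula $(\nabla b_z,\nabla b_{z'})=-\tfrac12\sin\alpha_E/(\sin\alpha_-\sin\alpha_+)$ that also makes the equality case $\alpha_E=\pi$ transparent.
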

\begin{proof}
	This follows, e.g., from \cite[(6.8.7) Satz]{SauterDipl} (see, e.g.,
	\cite[p.
	78]{StrangFix} for a sufficient criterion).
\end{proof}

For our example, we consider $\Omega=\left(  -1,1\right)  ^{2}$ and a
mesh as depicted in Figure \ref{figure:mesh_counter_example_trigs}. We employ
globally continuous piecewise linear finite elements, i.e., $S=S_{\mathcal{T}%
}^{1}$. The degrees of freedom on the boundary $\Gamma$ are located at
$P_{1}^{\Gamma}=(-1,-1)^{T}$, $P_{2}^{\Gamma}=(1,-1)^{T}$, $P_{3}^{\Gamma
}=(1,1)^{T}$ and $P_{4}^{\Gamma}=(-1,1)^{T}$. The inner degrees of freedom are
located at $P_{1}^{\Omega}=(-\alpha,0)^{T}$, $P_{2}^{\Omega}=(0,-\alpha)^{T}$,
$P_{3}^{\Omega}=(\alpha,0)^{T}$, $P_{4}^{\Omega}=(0,\alpha)^{T}$ and
$P_{5}^{\Omega}=(0,0)^{T}$, with parameter $\alpha\in(0,1)$. The mesh is
denoted by $\mathcal{T}_{\alpha}$.
We denote the unknowns as $u_{i}^{\Gamma}$ and $u_{i}^{\Omega}$ as well as the
associated basis functions with $b_{i}^{\Gamma}$ and $b_{i}^{\Omega}$ respectively.

\begin{figure}[ptb]
	\centering
	\definecolor{uuuuuu}{rgb}{0.26666666666666666,0.26666666666666666,0.26666666666666666}
\begin{tikzpicture}[line cap=round,line join=round,>=triangle 45,x=3cm,y=3cm]
\draw  (-1,1)-- (1,1); 
\draw  (1,1)-- (1,-1);
\draw  (1,-1)-- (-1,-1);
\draw  (-1,-1)-- (-1,1);
\draw  (-1,1)-- (-0.4,0);
\draw  (-0.4,0)-- (-1,-1);
\draw  (-1,-1)-- (0,-0.4);
\draw  (0,-0.4)-- (1,-1);
\draw  (1,-1)-- (0.4,0);
\draw  (0.4,0)-- (1,1);
\draw  (1,1)-- (0,0.4);
\draw  (0,0)-- node[below] {$\alpha$} ++ (0.35,0);
\draw  (0,0.4)-- (-1,1);
\draw  (-0.4,0)-- (0,0.4);
\draw  (0,0.4)-- (0.4,0);
\draw  (0.4,0)-- (0,-0.4);
\draw  (0,-0.4)-- (-0.4,0);
\draw  (-0.4,0)-- (0.4,0);
\draw  (0,0.4)-- (0,-0.4);
\draw [fill=black] (1,1) circle (2pt);
\draw (1,1.2) node {$P_{3}^\Gamma = (1,1)$};
\draw [fill=black] (1,-1) circle (2pt);
\draw (1,-1.2) node {$P_2^\Gamma = (1,-1)$};
\draw [fill=black] (-1,-1) circle (2pt);
\draw (-1,-1.2) node {$P_1^\Gamma = (-1,-1)$};
\draw [fill=black] (-1,1) circle (2pt);
\draw (-1,1.2) node {$P_4^\Gamma = (-1,1)$};
\draw [fill=black] (0,0.4) circle (2pt);
\draw (0,0.6) node {$P_4^\Omega$};
\draw [fill=black] (0.4,0) circle (2pt);
\draw (0.6,0) node {$P_3^\Omega$};
\draw [fill=black] (0,-0.4) circle (2pt);
\draw (0,-0.6) node {$P_2^\Omega$};
\draw [fill=black] (-0.4,0) circle (2pt);
\draw (-0.6,0) node {$P_1^\Omega$};
\draw [fill=black] (0,0) circle (2pt);
\draw (0.1,0.1) node {$P_5^\Omega$};
\end{tikzpicture}
	\caption{Mesh resulting in a non-trivial solution of the Helmholtz
	equation.}
	\label{figure:mesh_counter_example_trigs}
\end{figure}

\begin{lemma}\label{lem:unstablemesh}
Let $\alpha \in (0,1)$ and define $k_c$ as
\begin{equation}\label{eq:critical_wavenumber}
	k_c = \sqrt{\frac{6(2-\alpha)}{\alpha (1-\alpha)}}.
\end{equation}
Then for any $k\in\overset{\circ}{\mathbb{R}}$
the corresponding Galerkin discretization \eqref{GalDisc}
of \eqref{varformrobin} with conforming
piecewise linear elements on the mesh $\mathcal{T}_{\alpha}$ is well posed if $k \neq \pm k_c$.
For $k = \pm k_c$ the system matrix is singular and its kernel
has dimension one.
\end{lemma}

\begin{proof}
We construct an explicit non-trivial solution $u_{h}\in S_{\mathcal{T}}^{1}$
to the homogeneous equations. To that end, note that by \eqref{eq:impzero} we
have $u_{i}^{\Gamma}=0$ for $i=1,\dots,4$. We seek a non-trivial solution to
\begin{equation}
a_{0,k}(u_{h},v_{h})=(\nabla u_{h},\nabla v_{h})-k^{2}(u_{h},v_{h}%
)=0\quad\forall v_{h}\in S_{\mathcal{T}}^{1}.\label{eq:homogeneous_equations}%
\end{equation}
Our strategy is the following: We first test with the degrees of freedom
associated to the boundary. This
allows us to construct a candidate for a non-trivial solution. Next, we
test with the interior degrees of freedom. This allows to show the existence of a
critical wave number $k_c$ as stated in the present lemma,
which can be explicitly calculated.
Furthermore, we verfiy that the kernel of the system matrix for $k=\pm k_c$ is
in fact one dimensional. Finally, we show that for any other $k \neq \pm k_c$ 
the system matrix is regular.

We test with the
hat functions $b_{i}^{\Gamma}$ for $i=1,\dots,4$ associated to the degrees of
freedom on the boundary in \eqref{eq:homogeneous_equations}. Due to their
support, they do not interact with the hat function $b_{5}^{\Omega}$. We start
with the hat function $b_{1}^{\Gamma}$. For the construction of a candidate for a non-trivial solution,
the interactions with $b_{2}^{\Gamma}$
and $b_{4}^{\Gamma}$ are redundant, since $u_{2}^{\Gamma}=u_{4}^{\Gamma}=0$.
We are therefore left with the interactions with $b_{1}^{\Omega}$ and
$b_{2}^{\Omega}$. Due to the weakly acute angle condition and Lemma
\ref{lem:anglecond}, we find that $a_{0,k}(b_{1}^{\Gamma},b_{1}^{\Omega})<0$ for
all $k \in \mathbb{R} \setminus \{ 0 \}$. Regarding $b_{2}^{\Omega}$, we find due to the symmetry of the mesh
that $a_{0,k}(b_{2}^{\Omega},b_{1}^{\Gamma})=a_{0,k}(b_{1}^{\Omega}%
,b_{1}^{\Gamma})$. The same argument holds true for testing with the other hat
functions associated to the boundary. Therefore, any solution $u_{h}$ to
\eqref{eq:homogeneous_equations} must
solve the system
\[%
\begin{pmatrix}
\gamma & \gamma & 0 & 0\\
0 & \gamma & \gamma & 0\\
0 & 0 & \gamma & \gamma\\
\gamma & 0 & 0 & \gamma
\end{pmatrix}%
\begin{pmatrix}
u_{1}^{\Omega}\\
u_{2}^{\Omega}\\
u_{3}^{\Omega}\\
u_{4}^{\Omega}%
\end{pmatrix}
=%
\begin{pmatrix}
0\\
0\\
0\\
0
\end{pmatrix}
,
\]
with $\gamma:=-a_{0,k}(b_{1}^{\Omega},b_{1}^{\Gamma})$. This system is
satisfied by $(u_{1}^{\Omega},u_{2}^{\Omega},u_{3}^{\Omega},u_{4}^{\Omega
})=(1,-1,1,-1)$. We now test with the hat function $b_{1}^{\Omega}$, which
interacts with itself, $b_{2}^{\Omega}$ and $b_{4}^{\Omega}$ as well as
$b_{5}^{\Omega}$. For some constants $a=a(\alpha)>0$ and $b=b(\alpha)>0$ we
have
\[
a_{0,k}(b_{1}^{\Omega},b_{1}^{\Omega})=a-k^{2}b.
\]
It is easy to verify that the edge \([P_1^\Omega, P_2^\Omega]\) satisfies
the weakly acute angle condition for any \(0<\alpha<1\). Therefore,
\[
a_{0,k}(b_{2}^{\Omega},b_{1}^{\Omega})=-c-k^{2}d,
\]
for some $c=c(\alpha)>0$ and $d=d(\alpha)>0$. Due to symmetry we find
\[
a_{0,k}(b_{2}^{\Omega},b_{1}^{\Omega})=a_{0,k}(b_{4}^{\Omega},b_{1}^{\Omega}).
\]
The same arguments hold true for the other test functions $b_{2}^{\Omega}$,
$b_{3}^{\Omega}$ and $b_{4}^{\Omega}$, yielding the same values. Regarding the
test function $b_{5}^{\Omega}$, the symmetry of the mesh and the satisfied
weakly
acute angle conditions imply
\[
0 > a_{0,k}(b_{i}^{\Omega},b_{5}^{\Omega})=-e\qquad\forall i=1,\dots,4
\]
for some $e=e(\alpha)>0$. Furthermore, let $f:=a_{0,k}(b_{5}^{\Omega}%
,b_{5}^{\Omega})$. The vector $(u_{1}^{\Omega},u_{2}^{\Omega},u_{3}^{\Omega
},u_{4}^{\Omega},u_{5}^{\Omega})=(1,-1,1,-1,0)$ now satisfies the following
system of equations:
\begin{equation}%
\begin{pmatrix}
a-k^{2}b & -c-k^{2}d & 0 & -c-k^{2}d & -e\\
-c-k^{2}d & a-k^{2}b & -c-k^{2}d & 0 & -e\\
0 & -c-k^{2}d & a-k^{2}b & -c-k^{2}d & -e\\
-c-k^{2}d & 0 & -c-k^{2}d & a-k^{2}b & -e\\
-e & -e & -e & -e & f
\end{pmatrix}%
\begin{pmatrix}
1\\
-1\\
1\\
-1\\
0
\end{pmatrix}
=%
\left(a+2c-k^{2}(b-2d)\right)
\begin{pmatrix}
1  \\
-1 \\
1  \\
-1 \\
0
\end{pmatrix}
.\label{singexconstr}%
\end{equation}
Below we will show $b-2d > 0$ for any $\alpha \in (0,1)$.
This allows to construct exactly two solutions $k \in \mathbb{R} \setminus \{ 0 \}$ such that $a+2c-k^{2}(b-2d) = 0$,
which in turn lets the right-hand side in
(\ref{singexconstr}) vanish so that the vector $(1,-1,1,-1,0)^{T}$ is a
solution of the homogeneous equations.

To that end, let $U$ denote the upper quadrilateral with corners
$P_4^\Gamma$, $P_4^\Omega$, $P_5^\Omega$, $P_1^\Omega$ and let
$B$ denote the bottom quadrilateral with corners
$P_1^\Gamma$, $P_1^\Omega$, $P_5^\Omega$, $P_2^\Omega$.
Furthermore, let $L$ denote the left triangle with
corners $P_1^\Gamma$, $P_4^\Gamma$, $P_1^\Omega$.
Due to the support properties of $b_1^\Omega$ and $b_2^\Omega$ we find with the above
notation that
\begin{equation*}
	b = (b_1^\Omega,b_1^\Omega)_{L^2(\Omega)} = \int_{ L \cup U \cup B } (b_1^\Omega) ^2 \, dx
	\quad \text{ and } \quad
	d = (b_1^\Omega,b_2^\Omega)_{L^2(\Omega)} = \int_{ B } b_1^\Omega b_2^\Omega \, dx
\end{equation*}
Hence, we find
\begin{align*}
	b - 2d
	&=
	\int_{ L \cup U \cup B } (b_1^\Omega) ^2 \, dx
	- 2 \int_{ B } b_1^\Omega b_2^\Omega \, dx \\
	&=
	\int_{ L } (b_1^\Omega) ^2 \, dx+
	\int_{ U } (b_1^\Omega) ^2 \, dx+
	\int_{ B } (b_1^\Omega) ^2 \, dx
	- 2 \int_{ B } b_1^\Omega b_2^\Omega \, dx \\
	&=
	\int_{ L } (b_1^\Omega) ^2 \, dx+
	\int_{ U } (b_1^\Omega) ^2 \, dx+
	\int_{ B } (b_1^\Omega - b_2^\Omega) ^2 \, dx
	- \int_{ B } (b_2^\Omega) ^2 \, dx \\
	&=
	\int_{ L } (b_1^\Omega) ^2 \, dx+
	\int_{ B } (b_1^\Omega - b_2^\Omega) ^2 \, dx > 0,
\end{align*}
where, in the last equation,
we used the fact that $\int_{ U } (b_1^\Omega) ^2 \, dx = \int_{ B } (b_2^\Omega) ^2 \, dx$,
which holds again due to the symmetry of the grid, which proves $b-2d > 0$.
In fact, tedious but elementary calculations
yield that the $\alpha$ dependent quantities $a$,  $c$, and $b-2d$
are given by
\begin{equation*}
	a = 1 + \frac{\alpha^2-2\alpha+2}{\alpha(2-\alpha)} + \frac{1}{1-\alpha}, \qquad c = \frac{1-\alpha}{\alpha(2-\alpha)}
\end{equation*}
as well as
\begin{align*}
	b - 2d
	&=
	\int_{ L } (b_1^\Omega) ^2 \, dx+
	\int_{ B } (b_1^\Omega - b_2^\Omega) ^2 \, dx \\
	&= \frac{1-\alpha}{6} + \frac{\alpha^2+(2-\alpha) \alpha}{12} = \frac{1}{6},
\end{align*}
which yields equation \eqref{eq:critical_wavenumber} for the critical wave number $k_c$, via
$a+2c-k^{2}(b-2d) = 0$.

It is left to show that the vector $(1,-1,1,-1,0)^{T}$ is in fact (up to scaling)
the only non-trivial solution to the homogeneous equations for $k \in \mathbb{R} \setminus \{ 0 \}$.
By the above arguments, any other candidate has to be of the form
$(1,-1,1,-1,\mu)^{T}$ or $(0,0,0,0,\mu)^{T}$ for some $\mu \neq 0$.

We first show that $(0,0,0,0,\mu)^{T}$ for some $\mu \neq 0$
can never be a solution to the homogeneous equations:
Assume the contrary, then similarly as in equation \eqref{singexconstr},
we find $\mu (-e, -e, -e, -e, f)^T$ has to be the zero vector for some $\mu \neq 0$,
which is impossible, since $e > 0$ for any $k \in \mathbb{R} \setminus \{ 0 \}$.

To finish the poof, we finally show that $(1,-1,1,-1,\mu)^{T}$ for some $\mu \neq 0$
can never be a solution to the homogeneous equations for $k \in \mathbb{R} \setminus \{ 0 \}$.
Again, similarly as in equation \eqref{singexconstr}, we find that the
following equations
have to be satisfied:
\begin{equation*}%
\left(a+2c-k^{2}(b-2d)\right)
\begin{pmatrix}
1  \\
-1 \\
1  \\
-1 \\
0
\end{pmatrix}
+
\mu
\begin{pmatrix}
-e \\
-e \\
-e \\
-e \\
f
\end{pmatrix}
=
\begin{pmatrix}
0 \\
0 \\
0 \\
0 \\
0
\end{pmatrix}
.%
\end{equation*}
Hence, we find that $\left(a+2c-k^{2}(b-2d)\right) - \mu e = 0$ (first equation) as well as
$-\left(a+2c-k^{2}(b-2d)\right) - \mu e = 0$ (second equation), which is only possible for $\mu = 0$, since $e > 0$ for any $k \in \mathbb{R} \setminus \{ 0 \}$.
\end{proof}
\begin{remark}
The above example shows that there exist meshes in spatial dimension two, for
which unique solvability of the discretized equations does not hold. The
constructed solution has an oscillatory behaviour which can not be ruled out by
the boundary hat functions. More generally, the same arguments hold true if
one chooses a regular $2n$ polygon, with another rotated one inside, analogous
to the above example.
\end{remark}

{
\begin{remark}[On the magnitude of the critical wave number $k$ in
Lemma~\ref{lem:unstablemesh}]\label{remark:magnitude_of_crit_wavenumber}
Lemma~\ref{lem:unstablemesh}
allows to quantify the magnitude of the critical $k_c$ for which
a non-trivial solution exists.
By varying $\alpha$ in \eqref{eq:critical_wavenumber}, the minimal $k_c$ is given
by $k_c = \sqrt{6(3+2\sqrt{2})} \approx 5.91$ for the choice $\alpha = 2-\sqrt{2}$.
In Figure~\ref{fig:alpha_vs_k_plots} (left) we visualize the behaviour
of $k_c$ in dependence of $\alpha$.
Furthermore, we present the reciprocal of the discrete inf-sup constant (see Section~\ref{SecMeshEnrichment} for further detail)
for three different values of $\alpha$, see Figure~\ref{fig:alpha_vs_k_plots}
(right).
The singularities corresponding to the critical wave number $k_c$ can be
observed
in the plot.

\begin{figure}[h]
	\begin{subfigure}{.5\textwidth}
		\begin{center}
        \input{graphics/alpha_vs_k.pgf}
    \end{center}
	\end{subfigure}%
	\begin{subfigure}{.5\textwidth}
		\begin{center}
			\input{graphics/k_crit_vs_inf_sup.pgf}
    \end{center}
	\end{subfigure}
	\caption{Plot of $\alpha$ and the critical wave number $k_c$ (left).
	Plot of $k$ against the reciprocal of the discrete inf-sup constant for
	different values of $\alpha$ (right).}
	\label{fig:alpha_vs_k_plots}
\end{figure}
\end{remark}
}

{
For $N \in \mathbb{N}$ consider a mesh constructed by $N^2$
scaled versions of the mesh considered in Lemma~\ref{lem:unstablemesh}
as depicted in Figure \ref{figure:makromesh_singular}.
Let $\mathcal{T}_{\mathrm{macro}}$ denote the corresponding mesh.
The mesh size $h$ is then given by $h=2N^{-1}$.
A simple scaling argument together with Lemma~\ref{lem:unstablemesh}
and Remark~\ref{remark:magnitude_of_crit_wavenumber} allows to construct
non-trivial solutions the corresponding Galerkin
discretization \eqref{GalDisc} of \eqref{varformrobin} with conforming
piecewise linear elements:
\begin{figure}[ptb]
	\centering
	\definecolor{uuuuuu}{rgb}{0.26666666666666666,0.26666666666666666,0.26666666666666666}
\begin{tikzpicture}[line cap=round,line join=round,>=triangle 45,x=1cm,y=1cm]

\draw  (-1,1)-- (1,1); 
\draw  (1,1)-- (1,-1);
\draw  (1,-1)-- (-1,-1);
\draw  (-1,-1)-- (-1,1);
\draw  (-1,1)-- (-0.4,0);
\draw  (-0.4,0)-- (-1,-1);
\draw  (-1,-1)-- (0,-0.4);
\draw  (0,-0.4)-- (1,-1);
\draw  (1,-1)-- (0.4,0);
\draw  (0.4,0)-- (1,1);
\draw  (1,1)-- (0,0.4);
\draw  (0,0.4)-- (-1,1);
\draw  (-0.4,0)-- (0,0.4);
\draw  (0,0.4)-- (0.4,0);
\draw  (0.4,0)-- (0,-0.4);
\draw  (0,-0.4)-- (-0.4,0);
\draw  (-0.4,0)-- (0.4,0);
\draw  (0,0.4)-- (0,-0.4);

\draw  (-1+2,1)-- (1+2,1); 
\draw  (1+2,1)-- (1+2,-1);
\draw  (1+2,-1)-- (-1+2,-1);
\draw  (-1+2,-1)-- (-1+2,1);
\draw  (-1+2,1)-- (-0.4+2,0);
\draw  (-0.4+2,0)-- (-1+2,-1);
\draw  (-1+2,-1)-- (0+2,-0.4);
\draw  (0+2,-0.4)-- (1+2,-1);
\draw  (1+2,-1)-- (0.4+2,0);
\draw  (0.4+2,0)-- (1+2,1);
\draw  (1+2,1)-- (0+2,0.4);
\draw  (0+2,0.4)-- (-1+2,1);
\draw  (-0.4+2,0)-- (0+2,0.4);
\draw  (0+2,0.4)-- (0.4+2,0);
\draw  (0.4+2,0)-- (0+2,-0.4);
\draw  (0+2,-0.4)-- (-0.4+2,0);
\draw  (-0.4+2,0)-- (0.4+2,0);
\draw  (0+2,0.4)-- (0+2,-0.4);

\draw  (-1+4,1)-- (1+4,1); 
\draw  (1+4,1)-- (1+4,-1);
\draw  (1+4,-1)-- (-1+4,-1);
\draw  (-1+4,-1)-- (-1+4,1);
\draw  (-1+4,1)-- (-0.4+4,0);
\draw  (-0.4+4,0)-- (-1+4,-1);
\draw  (-1+4,-1)-- (0+4,-0.4);
\draw  (0+4,-0.4)-- (1+4,-1);
\draw  (1+4,-1)-- (0.4+4,0);
\draw  (0.4+4,0)-- (1+4,1);
\draw  (1+4,1)-- (0+4,0.4);
\draw  (0+4,0.4)-- (-1+4,1);
\draw  (-0.4+4,0)-- (0+4,0.4);
\draw  (0+4,0.4)-- (0.4+4,0);
\draw  (0.4+4,0)-- (0+4,-0.4);
\draw  (0+4,-0.4)-- (-0.4+4,0);
\draw  (-0.4+4,0)-- (0.4+4,0);
\draw  (0+4,0.4)-- (0+4,-0.4);

\draw  (-1,1+2)-- (1,1+2); 
\draw  (1,1+2)-- (1,-1+2);
\draw  (1,-1+2)-- (-1,-1+2);
\draw  (-1,-1+2)-- (-1,1+2);
\draw  (-1,1+2)-- (-0.4,0+2);
\draw  (-0.4,0+2)-- (-1,-1+2);
\draw  (-1,-1+2)-- (0,-0.4+2);
\draw  (0,-0.4+2)-- (1,-1+2);
\draw  (1,-1+2)-- (0.4,0+2);
\draw  (0.4,0+2)-- (1,1+2);
\draw  (1,1+2)-- (0,0.4+2);
\draw  (0,0.4+2)-- (-1,1+2);
\draw  (-0.4,0+2)-- (0,0.4+2);
\draw  (0,0.4+2)-- (0.4,0+2);
\draw  (0.4,0+2)-- (0,-0.4+2);
\draw  (0,-0.4+2)-- (-0.4,0+2);
\draw  (-0.4,0+2)-- (0.4,0+2);
\draw  (0,0.4+2)-- (0,-0.4+2);

\draw  (-1+2,1+2)-- (1+2,1+2); 
\draw  (1+2,1+2)-- (1+2,-1+2);
\draw  (1+2,-1+2)-- (-1+2,-1+2);
\draw  (-1+2,-1+2)-- (-1+2,1+2);
\draw  (-1+2,1+2)-- (-0.4+2,0+2);
\draw  (-0.4+2,0+2)-- (-1+2,-1+2);
\draw  (-1+2,-1+2)-- (0+2,-0.4+2);
\draw  (0+2,-0.4+2)-- (1+2,-1+2);
\draw  (1+2,-1+2)-- (0.4+2,0+2);
\draw  (0.4+2,0+2)-- (1+2,1+2);
\draw  (1+2,1+2)-- (0+2,0.4+2);
\draw  (0+2,0.4+2)-- (-1+2,1+2);
\draw  (-0.4+2,0+2)-- (0+2,0.4+2);
\draw  (0+2,0.4+2)-- (0.4+2,0+2);
\draw  (0.4+2,0+2)-- (0+2,-0.4+2);
\draw  (0+2,-0.4+2)-- (-0.4+2,0+2);
\draw  (-0.4+2,0+2)-- (0.4+2,0+2);
\draw  (0+2,0.4+2)-- (0+2,-0.4+2);

\draw  (-1+4,1+2)-- (1+4,1+2); 
\draw  (1+4,1+2)-- (1+4,-1+2);
\draw  (1+4,-1+2)-- (-1+4,-1+2);
\draw  (-1+4,-1+2)-- (-1+4,1+2);
\draw  (-1+4,1+2)-- (-0.4+4,0+2);
\draw  (-0.4+4,0+2)-- (-1+4,-1+2);
\draw  (-1+4,-1+2)-- (0+4,-0.4+2);
\draw  (0+4,-0.4+2)-- (1+4,-1+2);
\draw  (1+4,-1+2)-- (0.4+4,0+2);
\draw  (0.4+4,0+2)-- (1+4,1+2);
\draw  (1+4,1+2)-- (0+4,0.4+2);
\draw  (0+4,0.4+2)-- (-1+4,1+2);
\draw  (-0.4+4,0+2)-- (0+4,0.4+2);
\draw  (0+4,0.4+2)-- (0.4+4,0+2);
\draw  (0.4+4,0+2)-- (0+4,-0.4+2);
\draw  (0+4,-0.4+2)-- (-0.4+4,0+2);
\draw  (-0.4+4,0+2)-- (0.4+4,0+2);
\draw  (0+4,0.4+2)-- (0+4,-0.4+2);

\draw  (-1,1+4)-- (1,1+4); 
\draw  (1,1+4)-- (1,-1+4);
\draw  (1,-1+4)-- (-1,-1+4);
\draw  (-1,-1+4)-- (-1,1+4);
\draw  (-1,1+4)-- (-0.4,0+4);
\draw  (-0.4,0+4)-- (-1,-1+4);
\draw  (-1,-1+4)-- (0,-0.4+4);
\draw  (0,-0.4+4)-- (1,-1+4);
\draw  (1,-1+4)-- (0.4,0+4);
\draw  (0.4,0+4)-- (1,1+4);
\draw  (1,1+4)-- (0,0.4+4);
\draw  (0,0.4+4)-- (-1,1+4);
\draw  (-0.4,0+4)-- (0,0.4+4);
\draw  (0,0.4+4)-- (0.4,0+4);
\draw  (0.4,0+4)-- (0,-0.4+4);
\draw  (0,-0.4+4)-- (-0.4,0+4);
\draw  (-0.4,0+4)-- (0.4,0+4);
\draw  (0,0.4+4)-- (0,-0.4+4);

\draw  (-1+2,1+4)-- (1+2,1+4); 
\draw  (1+2,1+4)-- (1+2,-1+4);
\draw  (1+2,-1+4)-- (-1+2,-1+4);
\draw  (-1+2,-1+4)-- (-1+2,1+4);
\draw  (-1+2,1+4)-- (-0.4+2,0+4);
\draw  (-0.4+2,0+4)-- (-1+2,-1+4);
\draw  (-1+2,-1+4)-- (0+2,-0.4+4);
\draw  (0+2,-0.4+4)-- (1+2,-1+4);
\draw  (1+2,-1+4)-- (0.4+2,0+4);
\draw  (0.4+2,0+4)-- (1+2,1+4);
\draw  (1+2,1+4)-- (0+2,0.4+4);
\draw  (0+2,0.4+4)-- (-1+2,1+4);
\draw  (-0.4+2,0+4)-- (0+2,0.4+4);
\draw  (0+2,0.4+4)-- (0.4+2,0+4);
\draw  (0.4+2,0+4)-- (0+2,-0.4+4);
\draw  (0+2,-0.4+4)-- (-0.4+2,0+4);
\draw  (-0.4+2,0+4)-- (0.4+2,0+4);
\draw  (0+2,0.4+4)-- (0+2,-0.4+4);

\draw  (-1+4,1+4)-- (1+4,1+4); 
\draw  (1+4,1+4)-- (1+4,-1+4);
\draw  (1+4,-1+4)-- (-1+4,-1+4);
\draw  (-1+4,-1+4)-- (-1+4,1+4);
\draw  (-1+4,1+4)-- (-0.4+4,0+4);
\draw  (-0.4+4,0+4)-- (-1+4,-1+4);
\draw  (-1+4,-1+4)-- (0+4,-0.4+4);
\draw  (0+4,-0.4+4)-- (1+4,-1+4);
\draw  (1+4,-1+4)-- (0.4+4,0+4);
\draw  (0.4+4,0+4)-- (1+4,1+4);
\draw  (1+4,1+4)-- (0+4,0.4+4);
\draw  (0+4,0.4+4)-- (-1+4,1+4);
\draw  (-0.4+4,0+4)-- (0+4,0.4+4);
\draw  (0+4,0.4+4)-- (0.4+4,0+4);
\draw  (0.4+4,0+4)-- (0+4,-0.4+4);
\draw  (0+4,-0.4+4)-- (-0.4+4,0+4);
\draw  (-0.4+4,0+4)-- (0.4+4,0+4);
\draw  (0+4,0.4+4)-- (0+4,-0.4+4);

\draw (1+4,1.3+4) node {$(1,1)$};
\draw (1+4,-1.3) node {$(1,-1)$};
\draw (-1,-1.3) node {$(-1,-1)$};
\draw (-1,1.3+4) node {$(-1,1)$};
\end{tikzpicture}
	\caption{Mesh $\mathcal{T}_{\mathrm{macro}}$.}
	\label{figure:makromesh_singular}
\end{figure}
\begin{lemma}\label{lem:unstable_macro_mesh}
Fix $\alpha \in (0,1)$. Let $\hat{k}_{c}$ denote the critical wave number
as in equation~\eqref{eq:critical_wavenumber}.
Consider a conforming Galerkin
discretization \eqref{GalDisc} of \eqref{varformrobin} with
piecewise linear elements on the mesh $\mathcal{T}_{\mathrm{macro}}$.
Then, if $kh = 2\hat{k}_{c}$ holds true, the Galerkin discretization
is not uniquely solvable.
\end{lemma}
\begin{proof}
A scaling argument together with Lemma~\ref{lem:unstablemesh}
and Remark~\ref{remark:magnitude_of_crit_wavenumber} allows to construct
a global singular solution as follows:
On each of the $N^2$ sub-quadrilaterals one chooses the
non-trivial solution constructed in the proof of Lemma \ref{lem:unstablemesh}.
It is easy to see that with the condition $kh = 2\hat{k}_{c}$ this global function is
then also a non-trivial solution to the global system of homogeneous equations.
\end{proof}
}

\subsection{Structured quadrilateral grids}

\label{subsection:quads} The present section is devoted to the study of
conforming Galerkin discretizations using quadrilateral elements in spatial
dimension two. We employ structured tensor-product meshes. The first result
concerns a $p$-version on \textit{one} quadrilateral element.

Throughout this section the following notation is employed:
For vectors $\mathbf{a}=\left(
a_{1},a_{2}\right)  $, $\mathbf{b}=\left(  b_{1},b_{2}\right)  $ in
$\mathbb{C}^{2}$ we use the notation $\mathbf{a}\cdot\mathbf{b}:=a_{1}%
b_{1}+a_{2}b_{2}$ without complex conjugation.
Furthermore, $\|\cdot\|_2$ denotes the Euclidean 2-norm.

\begin{theorem}
\label{thm:one_quad_p_version} Let $\Omega=\widehat{K}\subset\mathbb{R}^{2}$,
where $\widehat{K}$ denotes the reference quadrilateral with vertices
$(\pm1,\pm1)^{T}$. Consider the Galerkin discretization \eqref{GalDisc} of
\eqref{varformrobin} with polynomials of degree $p\geq1$ on $\widehat{K}$.
Then, for any $k\in\overset{\circ}{\mathbb{R}}$ the matrix $\mathbf{A}_{k}$ in
\eqref{Kk} is regular.
\end{theorem}
\begin{proof}
As in the proof of Theorem \ref{thm:1d_ucp} it suffices to show that any
solution for the homogeneous problem is already trivial. Throughout the proof,
we denote by $S$ the finite element space, i.e., the space of polynomials of
total degree $p$ on $\widehat{K}$. Let $u\in S$ be a solution to the
homogeneous equations. We again have $u=0$ on $\partial\widehat{K}$, see
equation \eqref{eq:impzero}. Therefore, $u\in S$ solves
\begin{equation}
(\nabla u,\nabla v)-k^{2}(u,v)=0\quad\forall v\in S.\label{eq:hom_quads}%
\end{equation}
{
The proof relies, similarly to Theorem \ref{thm:1d_ucp},
on the choice of a special kind of test functions, i.e. Morawetz-multipliers.
}
Note that for any $a$, $b$, $c$, $d\in\mathbb{R}$ we have $(ax+b)u_{x}\in S$
and $(cy+d)u_{y}\in S$. Therefore, with $%
\boldsymbol{\rho}
:=(ax+b,cy+d)^{T}$ the function $v=%
\boldsymbol{\rho} \cdot\nabla u\in S$ is a valid test function. Choosing $v=%
\boldsymbol{\rho} \cdot\nabla u$ in \eqref{eq:hom_quads}, passing to the real part, integrating
by parts together with the facts that
$2\Re(w\overline{w}_{x})=\partial_{x}(|w|^{2})$ and
$2\Re(w\overline{w}_{y})=\partial_{y}(|w|^{2})$
for sufficiently smooth functions $w$ and
employing the boundary conditions of $u$, we find
\begin{align*}
0 &  =\Re\,\left[  (\nabla u,\nabla(
\boldsymbol{\rho}\cdot\nabla u))-k^{2}(u,
\boldsymbol{\rho}\cdot\nabla u)\right]  \\
&  =\Re\,\left[  (\nabla u,\nabla%
\boldsymbol{\rho} \,\nabla u)+(\nabla u,\nabla^{2}u\,
\boldsymbol{\rho})-k^{2}(u, \boldsymbol{\rho}
\cdot\nabla u)\right]  \\
&  =\Re\,\left[  (\nabla u,\nabla\boldsymbol{\rho}
\,\nabla u)\right]  +\frac{1}{2}(\nabla\lVert\nabla u\rVert_2^{2},%
\boldsymbol{\rho})-\frac{k^{2}}{2}(\nabla|u|^{2},%
\boldsymbol{\rho})\\
&  =\Re\,\left[  (\nabla u,\nabla%
\boldsymbol{\rho}
\,\nabla u)\right]  -\frac{1}{2}(\lVert\nabla u\rVert_2^{2},\nabla\cdot%
\boldsymbol{\rho}
)+\frac{1}{2}\langle\lVert\nabla u\rVert_2^{2},%
\boldsymbol{\rho}
\cdot\mathbf{n}\rangle+\frac{k^{2}}{2}(|u|^{2},\nabla\cdot%
\boldsymbol{\rho}
)-\underbrace{\frac{k^{2}}{2}\langle|u|^{2},
\boldsymbol{\rho}
\cdot\mathbf{n}\rangle}_{=0}.
\end{align*}
The choice $
\boldsymbol{\rho}
=(1-x,1-y)^{T}$, results in $\nabla%
\boldsymbol{\rho}
-1/2\,\nabla\cdot%
\boldsymbol{\rho}
\,I$ being the zero matrix. Furthermore, $\nabla\cdot
\boldsymbol{\rho}
=-2$. We therefore have
\[
\frac{1}{2}\langle\lVert\nabla u\rVert_2^{2},%
\boldsymbol{\rho}
\cdot\mathbf{n}\rangle-k^{2}\lVert u\rVert_{L^{2}(\widehat{K})}^{2}=0.
\]
We find $u\equiv0$ once we have shown $\boldsymbol{\rho}
\cdot\mathbf{n}\leq0$. Since $%
\boldsymbol{\rho}
\cdot\mathbf{n}=0$ on the top-right part of $\partial\widehat{K}$ and $%
\boldsymbol{\rho}
\cdot\mathbf{n}=-2$ on the bottom-left part, we can conclude $u\equiv0$.
\end{proof}

The natural next step is to consider an axial parallel quadrilateral domain
$\Omega\subset\mathbb{R}^2$ and use a mesh, which consists of \textit{one}
corridor of elements, see Figure \ref{figure:one_corridor_quads}.
\begin{figure}[ptb]
	\centering
	\definecolor{uququq}{rgb}{0.25098039215686274,0.25098039215686274,0.25098039215686274}
\begin{tikzpicture}[line cap=round,line join=round,>=triangle 45,x=0.5cm,y=0.5cm]
\draw  (-7,1)-- (1,1); 
\draw  (1,1)-- (1,-1);
\draw  (1,-1)-- (-7,-1);
\draw  (-7,-1)-- (-7,1);
\draw  (-5,1)-- (-5,-1);
\draw  (-4.3,-1)-- (-4.3,1);
\draw  (-3,1)-- (-3,-1);
\draw  (-2,-1)-- (-2,1);
\begin{scriptsize}
\draw [fill= black] (-7,1) circle (2pt);
\draw [fill= black] (-7,-1) circle (2pt);
\draw [fill= black] (-5,1) circle (2pt);
\draw [fill= black] (-5,-1) circle (2pt);
\draw [fill= black] (-4.3,-1) circle (2pt);
\draw [fill= black] (-4.3,1) circle (2pt);
\draw [fill= black] (-3,1) circle (2pt);
\draw [fill= black] (-3,-1) circle (2pt);
\draw [fill= black] (-2,1) circle (2pt);
\draw [fill= black] (-2,-1) circle (2pt);
\draw [fill= black] (1,-1) circle (2pt);
\draw [fill= black] (1,1) circle (2pt);
\end{scriptsize}
\end{tikzpicture}
	\caption{Example of a mesh as in Theorem \ref{thm:one_corridor_quads}.}
	\label{figure:one_corridor_quads}
\end{figure}
\begin{theorem}
\label{thm:one_corridor_quads} Let $\Omega\subset\mathbb{R}^{2}$ be an axial
parallel quadrilateral. Consider the Galerkin discretization \eqref{GalDisc}
of \eqref{varformrobin} with conforming finite elements with a mesh consisting
of a corridor of axial parallel quadrilaterals as depicted in Figure
\ref{figure:one_corridor_quads} and polynomial degree $p \geq1$. Then, for any
$k \in\overset{\circ}{\mathbb{R}}$ the matrix $\mathbf{A}_{k}$ in \eqref{Kk}
is regular.
\end{theorem}
\begin{proof}
Without loss of generality, we may assume that two of the sides of $\Omega$
are on
the lines $\{y=1\}$ and $\{y=-1\}$. Choosing $v=u$ and passing to the
imaginary part again leads to $u=0$ on $\Gamma$. We also find
\begin{equation}
0=(\lVert\nabla u\rVert_2^{2},1)-k^{2}(|u|^{2}%
,1).\label{eq:one_corridor_u_as_test_function}%
\end{equation}
Due to the axial symmetric mesh, the function $(1-y)u_{y}$ is a valid test
function. We can also write $(1-y)u_{y}=
\boldsymbol{\rho}
\cdot\nabla u$ with $%
\boldsymbol{\rho}
=(0,1-y)^{T}$. Proceeding as in the proof of Theorem
\ref{thm:one_quad_p_version} we find
\begin{align*}
0 &  =\Re\,\left[  (\nabla u,\nabla(%
\boldsymbol{\rho}
\cdot\nabla u))-k^{2}(u,%
\boldsymbol{\rho}
\cdot\nabla u)\right]  \\
&  =(\nabla u,\nabla%
\boldsymbol{\rho}
\,\nabla u)-\frac{1}{2}(\lVert\nabla u\rVert_2^{2},\nabla\cdot%
\boldsymbol{\rho}
)+\frac{1}{2}\langle\lVert\nabla u\rVert_2^{2},%
\boldsymbol{\rho}
\cdot\mathbf{n}\rangle+\frac{k^{2}}{2}(|u|^{2},\nabla\cdot%
\boldsymbol{\rho}
)-\frac{k^{2}}{2}\langle|u|^{2},%
\boldsymbol{\rho}
\cdot\mathbf{n}\rangle.
\end{align*}
Again we find $\langle|u|^{2},%
\boldsymbol{\rho}
\cdot\mathbf{n}\rangle=0$. Furthermore, note that
\[
\nabla%
\boldsymbol{\rho}
=%
\begin{pmatrix}
0 & 0\\
0 & -1
\end{pmatrix}
\qquad\text{ and }\qquad\nabla\cdot%
\boldsymbol{\rho}
=-1.
\]
Finally $
\boldsymbol{\rho}
\cdot\mathbf{n}=-2$ on the line $\{y=-1\}$ and vanishes on all other sides of
the boundary. Therefore, the above further simplifies to
\begin{equation}
0=\frac{1}{2}(|u_{x}|^{2},1)-\frac{1}{2}(|u_{y}|^{2},1)-%
\langle|u_{y}|^{2},1\rangle_{\{y=-1\}}-\frac{k^{2}}{2}(|u|^{2}%
,1).\label{eq:one_corridor_nabla_u_as_test_function}%
\end{equation}
Multiplying \eqref{eq:one_corridor_u_as_test_function} by $-1/2$ and adding
\eqref{eq:one_corridor_nabla_u_as_test_function} leads to
\[
0=-(|u_{y}|^{2},1)-\langle|u_{y}|^{2},1\rangle_{\{y=-1\}}.
\]
Consequently, $u_{y}\equiv0$. Combined with the fact that $u$ vanishes on the
boundary we find $u\equiv0$, which concludes the proof.
\end{proof}
\begin{figure}[ptb]
	\centering
	\definecolor{uququq}{rgb}{0.25098039215686274,0.25098039215686274,0.25098039215686274}
\begin{tikzpicture}[line cap=round,line join=round,>=triangle 45,x=0.5cm,y=0.5cm]
\draw (-7,1)-- (1,1);
\draw  (1,1)-- (1,-1);
\draw  (1,-1)-- (-7,-1);
\draw  (-7,-1)-- (-7,1);
\draw  (-5,1)-- (-5,-1);
\draw  (-4.3,-1)-- (-4.3,1);
\draw  (-3,1)-- (-3,-1);
\draw  (-2,-1)-- (-2,1);

\draw  (-7,-1)-- (1,-1);
\draw  (1,-1)-- (1,-3);
\draw  (1,-3)-- (-7,-3);
\draw  (-7,-3)-- (-7,-1);
\draw  (-5,-1)-- (-5,-3);
\draw  (-4.3,-3)-- (-4.3,-1);
\draw  (-3,-1)-- (-3,-3);
\draw  (-2,-3)-- (-2,-1);
\begin{scriptsize}
\draw [fill= black] (-7,1) circle (2pt);
\draw [fill= black] (-7,-1) circle (2pt);
\draw [fill= black] (-5,1) circle (2pt);
\draw [fill= black] (-5,-1) circle (2pt);
\draw [fill= black] (-4.3,-1) circle (2pt);
\draw [fill= black] (-4.3,1) circle (2pt);
\draw [fill= black] (-3,1) circle (2pt);
\draw [fill= black] (-3,-1) circle (2pt);
\draw [fill= black] (-2,1) circle (2pt);
\draw [fill= black] (-2,-1) circle (2pt);
\draw [fill= black] (1,-1) circle (2pt);
\draw [fill= black] (1,1) circle (2pt);

\draw [fill= black] (-7,-1) circle (2pt);
\draw [fill= black] (-7,-3) circle (2pt);
\draw [fill= black] (-5,-1) circle (2pt);
\draw [fill= black] (-5,-3) circle (2pt);
\draw [fill= black] (-4.3,-3) circle (2pt);
\draw [fill= black] (-4.3,-1) circle (2pt);
\draw [fill= black] (-3,-1) circle (2pt);
\draw [fill= black] (-3,-3) circle (2pt);
\draw [fill= black] (-2,-1) circle (2pt);
\draw [fill= black] (-2,-3) circle (2pt);
\draw [fill= black] (1,-3) circle (2pt);
\draw [fill= black] (1,-1) circle (2pt);
\end{scriptsize}
\end{tikzpicture}
	\caption{Example of a mesh as in Theorem \ref{thm:two_corridor_quads}.}
	\label{figure:two_corridor_quads}
\end{figure}

\begin{theorem}
\label{thm:two_corridor_quads} Let $\Omega\subset\mathbb{R}^{2}$ be an axial
parallel quadrilateral. Consider the Galerkin discretization \eqref{GalDisc}
of \eqref{varformrobin} with conforming finite elements with a mesh consisting
of two corridors of axial parallel quadrilaterals as depicted in Figure
\ref{figure:two_corridor_quads}. Then, for any $k \in\overset{\circ
}{\mathbb{R}}$ the matrix $\mathbf{A}_{k}$ in \eqref{Kk} is regular.
\end{theorem}
\begin{proof}
Without loss of generality, we may assume that the dividing line between the
two
corridors to be located on the line $\{y=0\}$. Let the two sides parallel to
the \(x\)-axis lie on the lines $\{y=a\}$ and $\{y=-b\}$, for some $a$, $b>0$.
Let $U\subset\Omega$ and $D\subset\Omega$ denote the upper and lower corridor
respectively. Again the proof relies on an appropriate choice of test
functions. These are the global function $v=u$, as well as $v=-2yu_{y}$
localized on $U$ and $D$ respectively, i.e., extended by zero. This
localization is again a valid test function since $v=-2yu_{y}$ is piecewise
polynomial and conforming since $v=0$ on the line $\{y=0\}$. Analogous
integration by parts as in the proof of Theorem \ref{thm:two_corridor_quads}
we find the following three equations to hold:
\begin{align*}
0  &  =(|u_{x}|^{2},1)_{\Omega}+(|u_{y}|^{2},1)_{\Omega}-k^{2}(|u|^{2}%
,1)_{\Omega},\\
0  &  =(|u_{x}|^{2},1)_{U}-(|u_{y}|^{2},1)_{U}-\langle|u_{y}|^{2}%
,1\rangle_{\{y=a\}}-k^{2}(|u|^{2},1)_{U},\\
0  &  =(|u_{x}|^{2},1)_{D}-(|u_{y}|^{2},1)_{D}-\langle|u_{y}|^{2}%
,1\rangle_{\{y=-b\}}-k^{2}(|u|^{2},1)_{D}.
\end{align*}
Adding the equations for $U$ and $D$ and subtracting the one on the whole
domain $\Omega$ again gives $u_{y}\equiv0$, which concludes the proof, with
the same arguments as in the proof of Theorem \ref{thm:one_corridor_quads}.
\end{proof}

The previous results relied on the use of appropriate \textit{global} test
functions. The remainder of this section is concerned with discretizations
employing quadrilaterals on structured Cartesian meshes. To that end let
$\widehat{K}$ again denote the reference quadrilateral with vertices
$(\pm1,\pm1)^{T}$. The setup is such that the bottom-left part of the boundary
of $\widehat{K}$ is part of the boundary $\Gamma$ of the computational domain
$\Omega$, again itself an axial parallel quadrilateral. The upper right part
of the boundary of $\widehat{K}$ is therefore inside the domain $\Omega$. Our
argument will be a localized one, i.e., we consider only test functions $v$
whose support is given by $\widehat{K}$. To that end let $\mathcal{Q}%
_{\mathrm{BL}}^{p}(\widehat{K})$ denote that space of polynomials of total
degree $p$, which are zero on the bottom-left part of the boundary of
$\widehat{K}$. Analogously let $\mathcal{Q}_{\mathrm{TR}}^{p}(\widehat{K})$
denote that space of polynomials of total degree $p$, which are zero on the
top-right part of the boundary of $\widehat{K}$. These spaces are therefore
given by
\begin{align*}
\mathcal{Q}_{\mathrm{BL}}^{p}(\widehat{K}) &  =\mathrm{span}\left\{
(1+x)(1+y)x^{i}y^{j}\colon0\leq i,j\leq p-1\right\},  \\
\mathcal{Q}_{\mathrm{TR}}^{p}(\widehat{K}) &  =\mathrm{span}\left\{
(1-x)(1-y)x^{i}y^{j}\colon0\leq i,j\leq p-1\right\}  .
\end{align*}
To perform a localized argument, we only test with functions $v\in
\mathcal{Q}_{\mathrm{TR}}^{p}(\widehat{K})$. Note that the Galerkin solution
$u$ vanishes on the bottom-left part of the boundary of $\widehat{K}$ and
therefore $\left.  u\right\vert _{\widehat{K}}\in\mathcal{Q}_{\mathrm{BL}}%
^{p}(\widehat{K})$. These considerations now lead to the question if a
solution $u\in\mathcal{Q}_{\mathrm{BL}}^{p}(\widehat{K})$ of
\begin{equation}
(\nabla u,\nabla v)_{\widehat{K}}-k^{2}(u,v)_{\widehat{K}}=0\qquad\forall
v\in\mathcal{Q}_{\mathrm{TR}}^{p}(\widehat{K}%
)\label{eq:non_reduced_local_equation_quads}%
\end{equation}
can only be $u\equiv0$. Upon introducing $b_{\mathrm{i}}=(1+x)(1+y)$ and
$b_{\mathrm{o}}(x,y)=(1-x)(1-y)$, we can write $u=b_{\mathrm{i}}u_{r}$ and
$v=b_{\mathrm{o}}v_{r}$ for polynomials $u_{r}$, $v_{r}\in\mathcal{Q}%
^{p-1}(\widehat{K}):=\mathrm{span}\left\{  x^{i}y^{j}\colon0\leq i,j\leq
p-1\right\}  $. The above is therefore equivalent to whether a solution
$u_{r}\in\mathcal{Q}^{p-1}(\widehat{K})$ to
\begin{equation}
(\nabla(b_{\mathrm{i}}u_{r}),\nabla(b_{\mathrm{o}}v_{r}))_{\widehat{K}}%
-k^{2}(b_{\mathrm{i}}u_{r},b_{\mathrm{o}}v_{r})_{\widehat{K}}=0\qquad\forall
v_{r}\in\mathcal{Q}^{p-1}(\widehat{K})\label{eq:reduced_equation_quads}%
\end{equation}
can only be $u_{r}\equiv0$. The answer to this question is yes for $p=1$, $2$,
$3$. However, for $p=4$ and some $k > 0$ such exists a non-trivial solution.
The consequences
of this are twofold: Firstly, on a structured Cartesian grid, the Galerkin
discretizations for $p=1$, $2$, $3$ are well posed for any $k\in
\mathbb{R}\backslash\left\{  0\right\}  $. Secondly, for $p \geq 4$ a \textit{localized}
argument based on the appropriate choice of test functions as in the
one-dimensional case, see Theorem \ref{thm:1d_ucp}, is not possible in two
dimensions, for all wave numbers simultaneously.

\begin{lemma}
\label{lemma:quads_lower_order} For $p=1$, $2$, $3$ the only solution to
\eqref{eq:reduced_equation_quads} is the trivial solution $u_{r}\equiv0$, for
every $k\in\overset{\circ}{\mathbb{R}} $. For $p=4$ and some
$k>0$ there exists a non-trivial solution to
\eqref{eq:reduced_equation_quads}.
\end{lemma}
\begin{proof}
The proof is an algebraic one. We calculate the matrix corresponding to the
system of linear equations \eqref{eq:reduced_equation_quads} explicitly. To
that
end we choose the monomial
basis of $\mathcal{Q}^{p-1}(\widehat{K})$, i.e., for $p=1$ the basis is given
by $\{1\}$, for $p=2$ by $\{1,y,x,xy\}$ and for $p=3$ by $\{1,y,y^{2}%
,x,xy,xy^{2},x^{2},x^{2}y,x^{2}y^{2}\}$. For $p=1$, i.e., a $1\times1$ system,
we find for the determinant the polynomial
\[
-\frac{16}{3}-k^{2}\frac{16}{9},
\]
which is strictly negative for any $k\in\mathbb{R}$. For $p=2$ we find
\[
\mathrm{det}\left(
\begin{pmatrix}
-\frac{16}{3} & \frac{8}{3} & \frac{8}{3} & 0\\[6pt]%
-\frac{8}{3} & -\frac{64}{45} & 0 & \frac{8}{15}\\[6pt]%
-\frac{8}{3} & 0 & -\frac{64}{45} & \frac{8}{15}\\[6pt]%
0 & -\frac{8}{15} & -\frac{8}{15} & -\frac{16}{45}\\
&  &  &
\end{pmatrix}
-k^{2}%
\begin{pmatrix}
\frac{16}{9} & 0 & 0 & 0\\[6pt]%
0 & \frac{16}{45} & 0 & 0\\[6pt]%
0 & 0 & \frac{16}{45} & 0\\[6pt]%
0 & 0 & 0 & \frac{16}{225}\\
&  &  &
\end{pmatrix}
\right)  =\frac{65536\left(  k^{2}+4\right)  ^{2}\left(  k^{4}+8k^{2}%
+60\right)  }{4100625},
\]
which is again non-zero. For $p=3$ we find the determinant is given, up to a
positive factor, by
\[
-\left(  k^{6}+15k^{4}+270k^{2}+3150\right)  \left(  4k^{6}+60k^{4}%
+495k^{2}+450\right)  ^{2}.
\]
For $p=4$ we find the following polynomial to be a factor of the determinant
\[
(-3492720-161028k^{2}+41013k^{4}+10800k^{6}+810k^{8}+36k^{10}+k^{12})^{2},
\]
which has a positive real root.
\end{proof}

An immediate consequence of Lemma \ref{lemma:quads_lower_order} is the
following Theorem.

\begin{theorem}
\label{thm:quads_lower_order_tensor_product} Let $\Omega\subset\mathbb{R}^{2}$
be an axial parallel quadrilateral. Consider the Galerkin discretization
\eqref{GalDisc} of \eqref{varformrobin} with conforming finite elements with a
tensor-product mesh and polynomial degree $p=1$, $2$, $3$. Then, for any
$k\in\overset{\circ}{\mathbb{R}} $ the matrix
$\mathbf{A}_{k}$
in
\eqref{Kk} is regular.
\end{theorem}

\begin{proof}
Propagating through the mesh by applying Lemma \ref{lemma:quads_lower_order}
yields the result.
\end{proof}

\section{A discrete unique continuation principle for the Helmholtz
equation}\label{sec:ducp} In this section, we will introduce the Algorithm MOTZ
(\textquotedblleft marching-of-the-zeros\textquotedblright), which mimics a
discrete
unique
continuation principle for the Helmholtz equation\footnote{A related notion
for a discrete unique continuation principle is used in the context of the
lattice Schr\"{o}dinger equation, see e.g., \cite{li2021andersonbernoulli}%
.} for \(d=2\) and a triangular mesh \(\mathcal{K}_{\mathcal{T}}\). We restrict
ourselves to the case where $p=1$ and use the notation as in
Remark \ref{rmk:notation}. The
discrete problem then reads
\begin{equation}
\text{find }u_{S}\in S\text{ such that }a_{k}\left(  u_{S},v\right)  =F\left(
v\right)  ,\quad\forall v\in S\label{GalDisc2},
\end{equation}
where \(S=S^1_{\mathcal{T}}\) is the space of continuous and piecewise linear
functions with
respect to a conforming triangulation \(\mathcal{K}_{\mathcal{T}}\) on
\(\Omega\).

\begin{notation}
In the following, we skip the polynomial degree $p$ in the notation and write
short $b_{z}$ for $b_{z,p}=b_{z,1}$.
\end{notation}

\begin{definition}
\label{def:transmissiondegree} Let $\mathcal{N}_{1}\subset\mathcal{N}$ be a
subset of nodes. For a node $z^{\prime}\in\mathcal{N}_{1}$ we define the
\emph{transmission degree} with respect to $\mathcal{N}$ by
\[
\mathrm{deg}(z^{\prime},\mathcal{N}_{1})=\operatorname*{card}\left\{
z\in\mathcal{N}\setminus\mathcal{N}_{1}\mid\lbrack z,z^{\prime}]\in
\mathcal{E}_{\Omega}\right\}  .
\]

\end{definition}


\begin{lemma}
\label{lem:lducp} Let $d=2$ and $\mathcal{N}_{\operatorname*{test}}%
\subset\mathcal{N}$ a subset of nodes. Let $z\in\mathcal{N}%
_{\operatorname*{dof}}:=\mathcal{N}\setminus\mathcal{N}_{\operatorname*{test}}$
have the property: there exists
$z^{\prime}\in\mathcal{N}_{\operatorname*{test}}$ with

\begin{enumerate}
\item[(a)] $E=[z,z^{\prime}]\in \mathcal{E}_{\Omega}$ and $\alpha_{E}\leq\pi$,
\label{item:lducp_angle_condition}
\item[(b)] $\deg(z^{\prime},\mathcal{N}_{\operatorname*{test}})=1$,
\label{item:lducp_degree_condition}
\end{enumerate}
(see Figure \ref{fig:anglecond}). Let $u\in S$ be a solution of
\eqref{GalDisc2}. Then the following
implication holds%
\begin{equation}
u(z^{\prime\prime})=0~~\forall z^{\prime\prime}\in\mathcal{N}%
_{\operatorname*{test}}\implies u(z)=0.\label{eq:lducp}%
\end{equation}

\end{lemma}

\begin{figure}
	\begin{center}
		\begin{tikzpicture}[scale = 2]
		\coordinate (orig) at (0,0);
		\coordinate (top) at (0,1);
		\coordinate (left) at (-1,0.3);
		\coordinate (right) at (1.2,0.2);
		\draw (0,0) -- (1.2,0.2) -- (0,1);
		\draw (0,0) -- (-1,0.3)  -- (0,1) -- (0,0);

		\node[regular polygon, regular polygon sides=3, inner sep=1.5pt, draw = red, fill = red] at (0, 1) {};

		\draw[draw = orange, fill = orange]  (1.2,0.2) circle[radius = 2pt];
		\draw[draw = orange, fill = orange]  (-1,0.3) circle[radius = 2pt];
		\draw[draw = orange, fill = orange]  (0,0) circle[radius = 2pt];
		\pic [draw, "$\alpha_+$", angle eccentricity=1.5] {angle =
			orig--left--top};
		\pic [draw, "$\alpha_-$", angle eccentricity=1.5] {angle =
			top--right--orig};
		\draw (top) node[anchor = south west] {$z$};
		\draw (orig) node[anchor = north west] {$z^\prime$};
		\end{tikzpicture}
	\end{center}
	\caption{The setting for Lemma \ref{lem:lducp}\\ (\protect\tikz
		\protect\draw[draw=orange, fill=orange]  (0,0) circle[radius = 3pt];
		\(\in
		\Nout\)
		and \trianglecolored{red} \(\in
		\Nin\)
		).}
	\label{fig:anglecond}
\end{figure}

\begin{notation}
We generally denote by $\mathcal{N}_{\operatorname*{test}}\subset\mathcal{N}$
a subset of nodes $z$, where we already know that the solution $u$ of
\eqref{GalDisc2} is zero, i.e., $u_{z}=0$ as in the assumption in
\eqref{eq:lducp}, but where we have a basis function $b_{z}$ that can be used
as test function. An example is $\mathcal{N}_{\operatorname*{test}%
}=\mathcal{N}\cap\Gamma$. Indeed, from \eqref{eq:impzero}, we know that
$u_{z}=0\ $for all $z\in\mathcal{N}\cap\Gamma$.
\end{notation}

\begin{remark}
\label{rmk:angelcond} If the weakly acute angle condition
\begin{equation}
\alpha_{E}\leq\pi\label{anglecond}%
\end{equation}
is violated for some edge $E\in\mathcal{E}_{\Omega}$ one can take the midpoint
of $E$ as a new mesh point and bisect the adjacent triangles.
For general dimension $d$, an analogous angle criterion as (\ref{anglecond})
can be formulated (see \cite[Lem. 2.1]{XuZikatanov99}).
\end{remark}
\begin{proof}[Proof of Lemma \ref{lem:lducp}] Let $b_{z^{\prime}}$ be the Lagrange
basis function for the node $z^{\prime}$ and $b_{z}$ the one for $z$. Then
(\ref{eq:lducp}) is equivalent to showing
\begin{equation}
\left(  \nabla b_{z},\nabla b_{z^{\prime}}\right)  -k^{2}\left(
b_{z},b_{z^{\prime}}\right)  \neq0. \label{condequiv}%
\end{equation}
By Lemma \ref{lem:anglecond} we have that $\left(  \nabla b_{z},\nabla
b_{z^{\prime}}\right)  \leq0$ if and only if $\alpha_{E}\leq\pi$. Since
$b_{z}$ and $b_{z^{\prime}}$ are positive in $\operatorname*{int}\left(
\tau_{+}\right)  \cup\operatorname{int}\left(  \tau_{-}\right)  $, we conclude
that (\ref{condequiv}) holds.
\end{proof}

\subsection{A first checking algorithm and numerical experiments}

In this section we present a main result of the paper
(cf. Theorem
\ref{TheoDUCPcheck}): If the algorithm MOTZ
(Algorithm \ref{alg:check_ducp_local}) returns
\texttt{certified} then we conclude that the discrete problem is well posed. On
the other hand,
if the output is \texttt{critical} then this means that the discretization
might lead to a
singular matrix. In the latter case a slight modification of the mesh
(using bisection or flip of an edge) may be applied to receive a regular system
matrix. We introduce the
following notation

\begin{itemize}
\item $\mathcal{N}_{\operatorname*{test}}$: The set of $z\in\mathcal{N}$ where
$u(z)=0$.
\item $\mathcal{N}_{\operatorname*{dof}}$: The complement of $\mathcal{N}%
_{\operatorname*{test}}$.
\end{itemize}

\begin{figure}
	\centering
	\begin{tabular}[t]{l l l l l}
&
\tabcell{\begin{tikzpicture}
	\phantom{	\draw (0.3,2.3) node[right] 
	{$\operatorname{deg}(z^\prime,\Nout) = 
			1 $} ;}
  \draw
    (0, 0) grid[step=1cm] (3, 2)
    (0, 1) -- (1, 2)
    (0, 0) -- (2, 2)
    (1, 0) -- (3, 2)
    (2, 0) -- (3, 1)
    (3, 0) -- (3.3,0)
    (3, 1) -- (3.3,1)
    (3, 2) -- (3.3,2)
    (0, 2) -- (0, 2.3)
    (1, 2) -- (1, 2.3)
    (2, 2) -- (2, 2.3)
    (3, 2) -- (3, 2.3)
  ;  
  \draw (0,-0.3) node  {$z^\prime$};
  \draw (1.3,0.8) node  {$z$};
  \draw[dashed,line width=2pt] (0,0) -- (1,1);
  \draw (0.2,-0.3) node[right] {$\operatorname{deg}(z^\prime,\Nout) = 1 $} ;
  \fill[radius=3pt, fill=orange]
    \foreach \y in {0, ..., 2} {
      (0, \y) circle[]
    }
  ;
  \fill[radius=3pt, fill=orange]
    \foreach \x in {1, ..., 3} {
       (\x, 0) circle[]
    }
  ;
\node[regular polygon, regular polygon sides=3, inner sep=1.3pt, draw = red, fill = red] at (1,1) {};
\node[regular polygon, regular polygon sides=3, inner sep=1.3pt, draw = red, fill = red] at (2,1) {};
\node[regular polygon, regular polygon sides=3, inner sep=1.3pt, draw = red, fill = red] at (3,1) {};
 
\foreach \x in {1, ..., 3} {
  \node[regular polygon, regular polygon sides=3, inner sep=1.3pt, draw = red, fill = red] at (\x, 2) {} ;
    }

\end{tikzpicture}}\hspace{1cm}
& \hspace{-30pt} \( \xrightarrow[\shortstack{$\textcolor{black}{\Nin}= 
\textcolor{black}{\Nin}\setminus \{z\}$}]
{ \shortstack{
$\textcolor{black}{\Nout}=\textcolor{black}{\Nout}\cup \{z\}$ } }\) 
& \hspace{-30pt}
\tabcell{\begin{tikzpicture}
\phantom{	\draw (1.3,-0.3) node[right] {$\operatorname{deg}(z^\prime,\Nout) = 
1 $} ;}
\phantom{	\draw (-0.7,2.3) node[right] {$\operatorname{deg}(z^\prime,\Nout) = 
		1 $} ;}
  \draw
    (0, 0) grid[step=1cm] (3, 2)
    (0, 1) -- (1, 2)
    (0, 0) -- (2, 2)
    (1, 0) -- (3, 2)
    (2, 0) -- (3, 1)
    (3, 0) -- (3.3,0)
    (3, 1) -- (3.3,1)
    (3, 2) -- (3.3,2)
    (0, 2) -- (0, 2.3)
    (1, 2) -- (1, 2.3)
    (2, 2) -- (2, 2.3)
    (3, 2) -- (3, 2.3)
  ;
  
  \draw (0,-0.3) node  {$z^\prime$};
  \draw (1.3,0.8) node  {$z$};

  \fill[radius=3pt, fill=orange]
    \foreach \y in {0, ..., 2} {
      (0, \y) circle[]
    }
  ;
  \fill[radius=3pt, fill=orange]
    \foreach \x in {1, ..., 3} {
       (\x, 0) circle[]
    }
  ;
  \fill[radius=3pt, fill=orange]
    \foreach \x in {1, ..., 1} {
       (\x, 1) circle[]
    }
  ;

    \foreach \x in {2, ..., 3} {
      \node[regular polygon, regular polygon sides=3, inner sep=1.3pt, draw = red, fill = red] at (\x, 1) {};
    }
    \foreach \x in {1, ..., 3} {
      \node[regular polygon, regular polygon sides=3, inner sep=1.3pt, draw = red, fill = red] at (\x, 2) {};
    }
\end{tikzpicture}}&\(\phantom{\xrightarrow{loooong}}\)\hspace{-20pt}\\
\(\xrightarrow{\phantom{loooong}}\)\hspace{-20pt} & 
\tabcell{\begin{tikzpicture}
	\phantom{\draw (0,-0.3) node  {$z^\prime$};}
	\phantom{	\draw (0.3,2.3) node[right] 
	{$\operatorname{deg}(z^\prime,\Nout) = 
			1 $} ;}
  \draw
    (0, 0) grid[step=1cm] (3, 2)
    (0, 1) -- (1, 2)
    (0, 0) -- (2, 2)
    (1, 0) -- (3, 2)
    (2, 0) -- (3, 1)
    (3, 0) -- (3.3,0)
    (3, 1) -- (3.3,1)
    (3, 2) -- (3.3,2)
    (0, 2) -- (0, 2.3)
    (1, 2) -- (1, 2.3)
    (2, 2) -- (2, 2.3)
    (3, 2) -- (3, 2.3)
    ;
  \draw[dashed,line width=2pt] (1,0) -- (2,1);
  \draw (1.3,-0.3) node[right] {$\operatorname{deg}(z^\prime,\Nout) = 1 $} ;
  \draw (1,-0.3) node  {$z^\prime$};
  \draw (2.3,0.8) node  {$z$};
  \fill[radius=3pt, fill=orange]
    \foreach \y in {1, ..., 2} {
      (0, \y) circle[]
    }
  ;
  \fill[radius=3pt, fill=orange]
    \foreach \x in {0, ..., 3} {
       (\x, 0) circle[]
    }
  ;
  \fill[radius=3pt, fill=orange]
    \foreach \x in {1, ..., 1} {
       (\x, 1) circle[]
    }
  ;

    \foreach \x in {2, ..., 3} {
       \node[regular polygon, regular polygon sides=3, inner sep=1.3pt, draw = red, fill = red] at (\x, 1) {};
    }
    \foreach \x in {1, ..., 3} {
       \node[regular polygon, regular polygon sides=3, inner sep=1.3pt, draw = red, fill = red] at (\x, 2) {};
    }
  ;  
\end{tikzpicture}}
& \hspace{-30pt}\(\xrightarrow[\shortstack{$\textcolor{black}{\Nin}= 
\textcolor{black}{\Nin}\setminus \{z\}$}]
{ \shortstack{
$\textcolor{black}{\Nout}=\textcolor{black}{\Nout}\cup \{z\}$ } }\) 
& \hspace{-30pt}
\tabcell{\begin{tikzpicture}
	\phantom{\draw (0,-0.3) node  {$z^\prime$};}
	\phantom{\draw (1.3,-0.3) node[right] {$\operatorname{deg}(z^\prime,\Nout) 
	= 1 $} ;}
\phantom{	\draw (-0.7,2.3) node[right] {$\operatorname{deg}(z^\prime,\Nout) = 
		1 $} ;}
  \draw
    (0, 0) grid[step=1cm] (3, 2)
    (0, 1) -- (1, 2)
    (0, 0) -- (2, 2)
    (1, 0) -- (3, 2)
    (2, 0) -- (3, 1)
    (3, 0) -- (3.3,0)
    (3, 1) -- (3.3,1)
    (3, 2) -- (3.3,2)
    (0, 2) -- (0, 2.3)
    (1, 2) -- (1, 2.3)
    (2, 2) -- (2, 2.3)
    (3, 2) -- (3, 2.3)
    ;
  \fill[radius=3pt, fill=orange]
    \foreach \y in {1, ..., 2} {
      (0, \y) circle[]
    }
  ;
  \fill[radius=3pt, fill=orange]
    \foreach \x in {0, ..., 3} {
       (\x, 0) circle[]
    }
  ;
  \fill[radius=3pt, fill=orange]
    \foreach \x in {1, ..., 2} {
       (\x, 1) circle[]
    }
  ;
   \foreach \x in {3, ..., 3} {
       \node[regular polygon, regular polygon sides=3, inner sep=1.3pt, draw = red, fill = red] at (\x, 1){};
    }
    \foreach \x in {1, ..., 3} {
      \node[regular polygon, regular polygon sides=3, inner sep=1.3pt, draw = red, fill = red] at (\x, 2) {};
    }
\end{tikzpicture}}
\end{tabular}
	\caption{Procedure described in Alg. \ref{alg:check_ducp_local} (\protect\tikz
		\protect\draw[draw=orange, fill=orange]  (0,0) circle[radius = 3pt];
		\(\in
		\Nout\)
		and \trianglecolored{red} \(\in
		\Nin\)
		). }
\end{figure}

The idea of Algorithm \ref{alg:check_ducp_local} is the following: We
initialize the algorithm with $\mathcal{N}_{\operatorname*{test}}%
=\mathcal{N}\cap\Gamma$ and $\mathcal{N}_{\operatorname*{dof}}=\mathcal{N}%
\setminus\mathcal{N}_{\operatorname*{test}}$. Recall that for $z^{\prime}%
\in\mathcal{N}_{\operatorname*{test}}$ with $\deg\left(  z^{\prime
},\mathcal{N}_{\operatorname*{test}}\right)  =1$ (cf. Definition
\ref{def:transmissiondegree}) there exists one and only one
$z^{\prime}\in\mathcal{N}_{\operatorname*{dof}}$ such that the edge $E=\left[
z,z^{\prime}\right]  $ belongs to $\mathcal{E}_{\Omega}$. Lemma
\ref{lem:lducp}, together with \eqref{eq:impzero} then implies that $u(z)=0$.
We update the sets $\mathcal{N}_{\operatorname*{test}}=\mathcal{N}%
_{\operatorname*{test}}\cup\{z\}$ and $\mathcal{N}_{\operatorname*{dof}%
}=\mathcal{N}_{\operatorname*{dof}}\setminus\{z\}$ accordingly and repeat the
same procedure. After each step one has
\[
u(z^{\prime\prime})=0~~\forall z^{\prime\prime}\in\mathcal{N}%
_{\operatorname*{test}}.
\]
If $\mathcal{N}_{\operatorname*{test}}=\mathcal{N}$ the algorithm stops.

We remark that we do not stop the algorithm if
the weakly acute angle condition \eqref{anglecond} is not satisfied.
Instead, we assign to the corresponding edge the property \texttt{acute(E) =
false} (line 7 of Alg. \ref{alg:check_ducp_local}). If angle conditions are not
satisfied
everywhere, but the
algorithm ends with \(\Nout = \N\), we bisect the relevant, adjacent
triangles in a post-processing step as explained in Remark \ref{rmk:angelcond}
using Algorithm \ref{alg:anglebisection} in the
end. This is possible since two transmission edges never share an adjacent
triangle.
\begin{algorithm}
	\caption{\alg}
	\label{alg:check_ducp_local}
	\noindent
	\hspace*{\algorithmicindent} \textbf{Input:} \(\Nout\), \(\N\),
	\(\mathcal{E}_{\Omega}\) \\
	\hspace*{\algorithmicindent} \textbf{Output:} MOTZ\_result,
	MOTZ\_trans, MOTZ\_angle, \(\Nout\), \(\Nin\),
	\(\mathcal{E}_{\Omega}\)
	\begin{algorithmic}[1]
		\STATE MOTZ\_result = \textbf{certified}, MOTZ\_trans = \(\TRUE\),
		MOTZ\_angle = \(\TRUE\)
		\STATE \(\Nin:= \N\setminus \Nout\)
		\WHILE{\(\Nin \neq \emptyset\)}
		\IF{$\exists (z,z^\prime)\in\Nin\times\Nout$: \(E=[z,z^\prime]\in
		\mathcal{E}_{\Omega}\) \(\wedge\) \(\deg(z^\prime,
		\Nout) = 1\)}
		\STATE trans\((E)=\TRUE\)
		\IF{\(\alpha_E > \pi\)}
		\STATE acute\((E)= \FALSE\)
		\STATE MOTZ\_angle = \(\FALSE\)
		\ENDIF
		\STATE \(\Nout = \Nout\cup \{z\}\)
		\STATE \(\Nin = \Nin \setminus \{z\}\)
		\ELSE
		\STATE MOTZ\_trans = \(\FALSE\) ; \textbf{STOP}
		\ENDIF
		\ENDWHILE
		\IF{MOTZ\_angle == \(\FALSE\) \(\vee\) MOTZ\_trans == \(\FALSE\)}
		\STATE MOTZ\_result = \textbf{critical}
		\ENDIF
	\end{algorithmic}

\end{algorithm}

\begin{algorithm}
	\caption{correct\_angle\_condition}
	\label{alg:anglebisection}
	\noindent
	\hspace*{\algorithmicindent} \textbf{Input:} \(\N\),
	\(\mathcal{E}_{\Omega}\) \\
	\hspace*{\algorithmicindent} \textbf{Output:} updated \(\N\),
	\(\mathcal{E}_{\Omega}\)
	\begin{algorithmic}[1]
		\FORALL{\(E\in \mathcal{E}_{\Omega}\) with \(\operatorname{trans}(E) =
		\TRUE\)
			\(\wedge\) \(\alpha_E >\pi\)}
		\STATE bisect \(E\) until weakly acute angle condition is satisfied
		\ENDFOR
	\end{algorithmic}
\end{algorithm}

\begin{lemma}
\label{lem:finitebisect} After a finite number of bisections the weakly acute
angle condition is satisfied.
\end{lemma}

\begin{proof}
We consider the setting depicted in Figure \ref{fig:finitebisect}. Let
$\tau=[A,B,C]$ be a triangle with corresponding angles $\alpha,\beta,\gamma$
such that $\alpha\leq\beta<\gamma$ and $\gamma>\frac{\pi}{2}$. After one
bisection the angle $\gamma$ splits in two angles $\gamma_{1}+\gamma
_{2}=\gamma$ (cf. Figure \ref{fig:finitebisect}) with the convention
$\gamma_{1}\leq\gamma_{2}$. Then $\gamma_{1}\geq\alpha$ \cite[Eq.
(4) and (6)]{Stamm1998}. This implies that $\gamma_{1}\leq\gamma_{2}=\gamma
-\gamma_{1}\leq\gamma-\alpha$. If $\gamma_{2}>\frac{\pi}{2}$, then another
bisection implies with the same arguments that $\gamma_{3},\gamma_{4}%
\leq\gamma-2\alpha$, since $\beta\geq\alpha$ as well as $\tilde{\alpha}%
=\alpha+\alpha_{1}\geq\alpha$. We conclude that after a finite number $n$ of
bisections (depending on the smallest angle in the triangle), all angles
$\gamma_{i}$, which subdivide $\gamma$, i.e., $\sum_{i=1}^{n+1}\gamma
_{i}=\gamma$ satisfy $\gamma_{i}\leq\frac{\pi}{2}$.
\end{proof}
\begin{figure}
	\centering
	\begin{minipage}{0.5\textwidth}
		\centering
		\begin{center}
			\begin{tikzpicture}[scale = 1.1]
			\coordinate (orig) at (0,0);
			\coordinate (top) at (1,2);
			\coordinate (left) at (-3,0);
			\coordinate (right) at (3,0);
			\coordinate (midright) at (1.5,0);
			\draw (orig) -- (right) -- (top);
			\draw (orig) -- (left)  -- (top);
			\draw (orig) -- (left)  -- (top) ;
			\draw[dashed] (top) -- (orig);
			\draw[dashed] (top) -- (midright);
			\pic [draw, red, "$\gamma_1$"{red}, angle eccentricity=1.5, angle
			radius =
			0.6cm] {angle =
				left--top--orig};
			\pic [draw, green, "$\gamma_2$"{green, shift={(0.5cm, 0.6cm)}},
			angle
			eccentricity=1.5, angle radius =
			0.7cm] {angle =
				orig--top--right};
			\pic [draw, blue, "$\gamma_3$"{blue}, angle
			eccentricity=1.5, angle radius =
			0.8cm] {angle = orig--top--midright};
			\pic [draw, purple, "$\gamma_4$"{purple}, angle
			eccentricity=1.5, angle radius =
			0.9cm] {angle = midright--top--right};
			\pic [draw, "$\gamma$"{shift={(0.1cm, 0.4cm)}},
			angle eccentricity=1.5] {angle =
				left--top--right};
			\pic [draw, "$\beta$", angle eccentricity=1.5] {angle =
				top--right--orig};
			\pic [draw, "$\alpha$", angle eccentricity=1.5, angle radius =
			0.7cm] {angle =
				orig--left--top};
			\pic [draw, "$\tilde{\alpha}=
			\alpha+\alpha_1$"{shift={(0.3,-0.3)}},
			angle
			eccentricity=1.5, angle radius =
			0.7cm] {angle =
				midright--orig--top};
			\draw (top) node[anchor = south] {$C$};
			\draw (left) node[anchor = east] {$A$};
			\draw (right) node[anchor = west] {$B$};
			\end{tikzpicture}
			\caption{Setting for Lemma \ref{lem:finitebisect}. }
			\label{fig:finitebisect}
		\end{center}
	\end{minipage}
	\hfill
	\begin{minipage}{0.4\textwidth}
		\centering
		\begin{tikzpicture}[scale = 2, rotate=-54]
		\coordinate (orig) at (0,0);
		\coordinate (top) at (0.2,2);
		\coordinate (left) at (-1,0.5);
		\coordinate (right) at (0.8,1.1);
		\coordinate (mid)  at (0.1,1);
		\draw (orig) -- (right) -- (top);
		\draw (orig) -- (left)  -- (top) -- (orig);
		\draw (orig) -- (left)  -- (top) -- (orig);
		\draw[dashed] (left) -- (mid)  -- (right);

		\fill[draw = red, fill=red] plot[mark=triangle*, mark options={rotate=54}] (top);
		\fill[draw = red, fill=red] plot[mark=triangle*, mark options={rotate=54}] (mid);

		\draw[draw = orange, fill = orange]  (right) circle[radius = 2pt];
		\draw[draw = orange, fill = orange]  (left) circle[radius = 2pt];
		\draw[draw = orange, fill = orange]  (orig) circle[radius = 2pt];
		\draw (top) node[anchor = south west] {$z$};
		\draw (orig) node[anchor = east] {$z^\prime$};
		\draw (mid) node[anchor = south] {$\tilde{z}$};
		\end{tikzpicture}
		\caption{The setting for Lemma \ref{lem:bisect1}. \\(\protect\tikz
			\protect\draw[draw=orange, fill=orange]  (0,0) circle[radius =
			3pt];\(\in \Nout\), \trianglecolored{red} \(\in \Nin\)).
		}
		\label{fig:bisect2}
	\end{minipage}
\end{figure}

\begin{lemma}
\label{lem:bisect1} Assume that the Algorithm MOTZ returns
\textnormal{\texttt{MOTZ\_angle = false}} and \textnormal{\texttt{MOTZ\_trans =
true}} for a given Galerkin
discretization as in
\eqref{GalDisc2}.
Running the bisection Algorithm \ref{alg:anglebisection}, does not change the
outcome of \texttt{MOTZ\_trans}.
\end{lemma}

\begin{proof}
Let $E=[z^{\prime},z]$ be a transmission edge such that $\alpha_{E}>\pi$.
After one bisection of $E$, a new vertex $\tilde{z}$ on $E$ is added (cf. Figure
\ref{fig:bisect2}). In order to have the same outcome of Algorithm MOTZ\_trans
we
need to show that $u(z^{\prime})=u(z)=0$. Indeed, this follows from applying
Lemma \ref{lem:lducp} along the edge $[z^{\prime},\tilde{z}]$ and then
$[\tilde{z},z]$, provided the angle condition is met. If the angle
condition does not hold, the argument can be repeated for every subsequent
bisection. The algorithm stops after a finite number of bisections,
according to Lemma \ref{lem:finitebisect}.
\end{proof}

\begin{theorem}
\label{TheoDUCPcheck}Consider the Galerkin discretization of
(\ref{varformrobin}) by (\ref{GalDisc2}).
\begin{enumerate}[(a)]
	\item If MOTZ$(\mathcal{N}\cap\Gamma,\mathcal{N}%
,\mathcal{E}_{\Omega})$ returns the value \textnormal{\texttt{MOTZ\_result =
certified}}, then the system matrix $\mathbf{A}_{k}$
in (\ref{Kk}) is regular for \emph{any} \(k\in\overset{\circ}{\mathbb{R}}\).
\item If MOTZ$(\mathcal{N}\cap\Gamma,\mathcal{N}%
,\mathcal{E}_{\Omega})$ returns the values \textnormal{\texttt{MOTZ\_trans
= true}} and \textnormal{\texttt{MOTZ\_angle = false}}, then after running
Algorithm \ref{alg:anglebisection}, the system matrix $\mathbf{A}_{k}$ in
(\ref{Kk}) for the \emph{new} mesh is regular \emph{any}
\(k\in\overset{\circ}{\mathbb{R}}\).
\end{enumerate}
\end{theorem}
\begin{proof}
From $u\in S$ and (\ref{eq:impzero}) we know that $\left.  u\right\vert
_{\Gamma}=0$. Choose $z_{1}$, $z_{1}^{\prime}$ as defined in Algorithm
\ref{alg:check_ducp_local} MOTZ
and set $E=\left[  z_{1},z_{1}^{\prime}\right]  $. With Lemma \ref{lem:lducp},
we conclude that $u\left(  z_1^\prime\right)  =0$. By an inductive application
of this
argument to pairs ($z_{j}$, $z_{j}^{\prime}$) we obtain that $u$ is zero at
all $z\in\mathcal{N}$. This means that $u$ vanishes in all mesh points
$\mathcal{N}$. If \texttt{MOTZ\_trans = true}, but \texttt{MOTZ\_angle =
		false}, running Algorithm 2 makes sure that for the new mesh all
		relevant weakly acute angle conditions are met. This procedure does not
		change the output of \texttt{MOTZ\_trans} as proved in Lemma
		\ref{lem:bisect1}. Therefore, if we run Algorithm 1 with the new mesh
		the output will be \texttt{MOTZ\_result = certified} and the first
		statement of the theorem applies.
\end{proof}

\begin{remark}
	In general, a lower bound on the smallest angle of the mesh determines how
	many bisections are at most needed. In all examples that are presented in
	this publication, all weakly acute angle conditions were satisfied. In
	particular
		this means that the outcome \texttt{MOTZ\_result} solely depended on
		the
		outcome \texttt{MOTZ\_trans}, i.e. on the connectivity of the mesh. We
		also note that the algorithm could easily be modified in order to avoid
		edges which do not satisfy the weakly acute angle conditions (if
		possible). However, since acute angle conditions in our examples were
		always met, this was not implemented in our code.
\end{remark}

Figure \ref{fig:ex_corner_mesh} shows the finite element mesh of a non-convex
geometry with re-entrant corners. The boundary with Robin boundary conditions
is illustrated in blue. In order to determine if the Galerkin finite element
method \eqref{GalDisc2} for this mesh is well posed, we apply Algorithm
\ref{alg:check_ducp_local} with $\mathcal{N}_{\operatorname*{test}}$ being the nodes on the
boundary (cf. Figure \ref{fig:ex_corner_0}). The red nodes belong to
$\mathcal{N}_{\operatorname*{dof}}$.\newline In the subsequent figures the
evolution of the algorithm is shown. It successively tries to find nodes in
$\mathcal{N}_{\operatorname*{dof}}$ that have a neighbouring node in
$\mathcal{N}_{\operatorname*{test}}$ with transmission degree 1. Once such a
node has been identified, it is removed from $\mathcal{N}_{\operatorname*{dof}%
}$ and added to $\mathcal{N}_{\operatorname*{test}}$. In this example the
procedure can be repeated until $\mathcal{N}_{\operatorname*{dof}}$ is the
empty set and $\mathcal{N}_{\operatorname*{test}}=\mathcal{N}$, i.e., MOTZ will
return \texttt{MOTZ\_trans = true}. Since also all angle conditions are
satisfied the algorithm will return \texttt{certified}. Furthermore, due to
the regularity of the mesh,
nodes that satisfy condition (a) of Lemma \ref{lem:lducp} are easily found in
each step, since they are typically located next to the node that has been
removed from $\mathcal{N}_{\operatorname*{dof}}$ in the previous step.
\newline Note that the order in which nodes are removed from $\mathcal{N}%
_{\operatorname*{dof}}$ depends on the enumeration of the nodes in the mesh.
The outcome \texttt{MOTZ\_trans} however, is independent of the
node enumeration.
\begin{figure}[htb]
	\noindent
	\begin{subfigure}{.32\textwidth}
		\centering
		\includegraphics[width=1.1\linewidth]{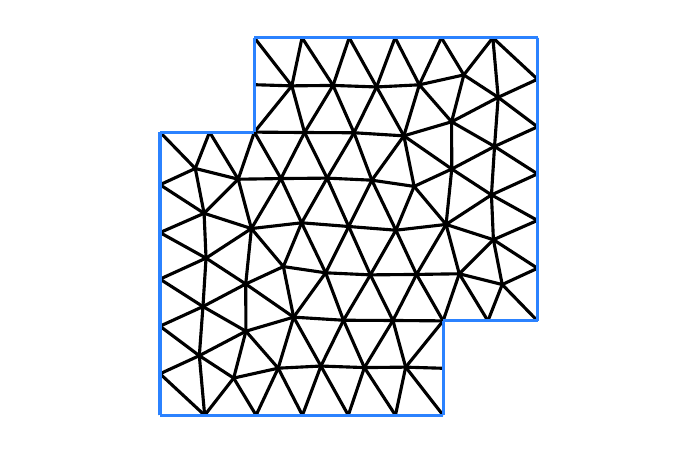}
		\caption{Mesh with boundary}
		\label{fig:ex_corner_mesh}
	\end{subfigure}%
	\begin{subfigure}{.32\textwidth}
		\centering
 		\includegraphics[width=1.1\linewidth]{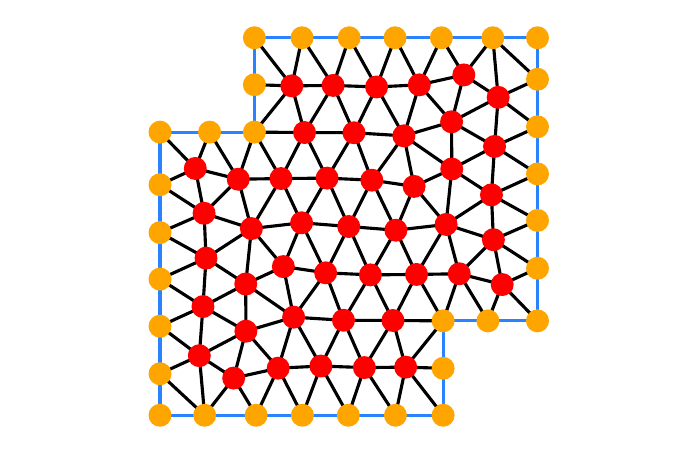}
		\caption{Step 0: initial state}
		\label{fig:ex_corner_0}
	\end{subfigure}
	\begin{subfigure}{.32\textwidth}
		\centering
		\includegraphics[width=1.1\linewidth]{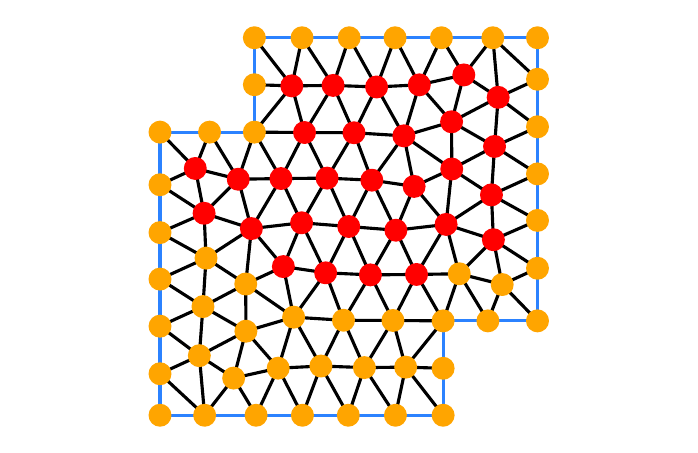}
		\caption{Step 15}
		\label{fig:ex_corner_15}
	\end{subfigure}
	\\[4mm]
	\begin{subfigure}{.32\textwidth}
		\centering
		\includegraphics[width=1.1\linewidth]{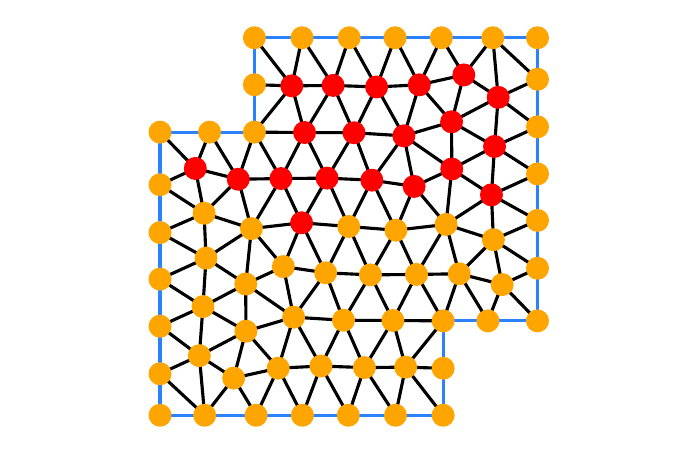}
		\caption{Step 25}
		\label{fig:ex_corner_25}
	\end{subfigure}%
	\begin{subfigure}{.32\textwidth}
		\centering
		\includegraphics[width=1.1\linewidth]{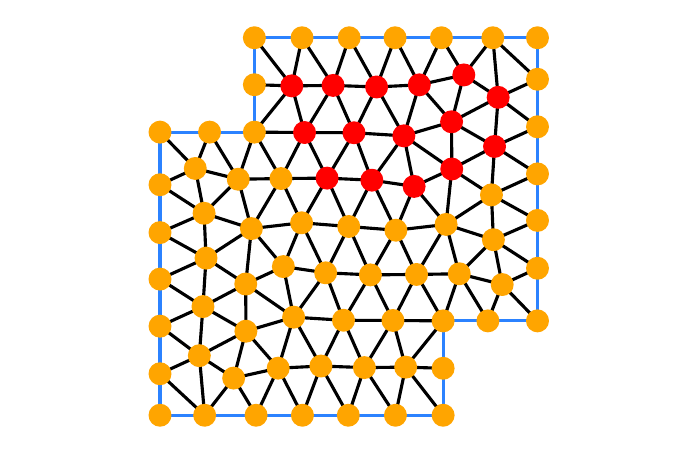}
		\caption{Step 30}
		\label{fig:ex_corner_30}
	\end{subfigure}
	\begin{subfigure}{.32\textwidth}
		\centering
		\includegraphics[width=1.1\linewidth]{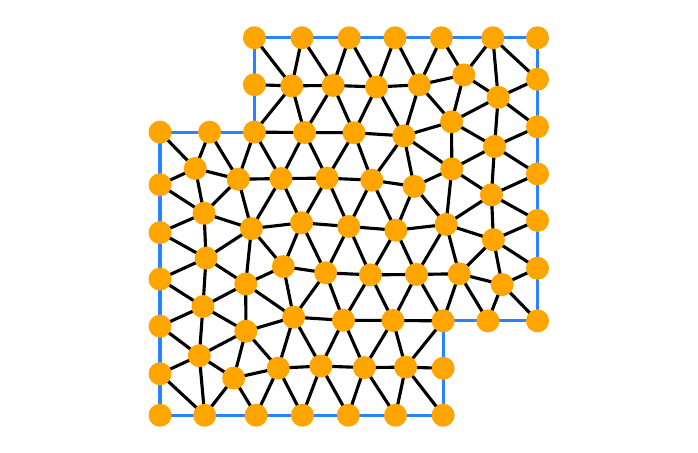}
		\caption{Step 45: final state}
		\label{fig:ex_corner_45}
	\end{subfigure}
	\caption{Example of a non-convex geometry and the evolution of Algorithm MOTZ (with result \texttt{MOTZ\_result = certified}).}
	\label{fig:mesh_corner}
\end{figure}

Figure \ref{fig:mesh_hole} shows the mesh of a geometry with one hole. As
before, the boundary with Robin boundary conditions is illustrated in blue (note, that the hole has Robin boundary conditions as well) and
we initialize the algorithm with $\mathcal{N}_{\operatorname*{test}%
}:=\mathcal{N}\cap\Gamma$. Also in this example, MOTZ returns
\texttt{MOTZ\_result = certified} which means that problem \eqref{GalDisc2} is
well posed. In
the following we refer to the nodes in $\mathcal{N}_{\operatorname*{dof}}$
(red nodes) that are connected by an edge to the boundary nodes of the inner
circle
as
`layer 1' nodes, `layer 0' being the boundary nodes on the circle.
Interestingly, none of the `layer 1' nodes can be marked orange initially,
since each boundary node on the circle is connected to at least two `layer 1'
nodes and therefore has a transmission degree larger than one. Thus, MOTZ has
to start from the outer boundary and successively moves towards the interior
boundary points. Only in step 62 of the algorithm (see Figure
\ref{fig:ex_hole_62}) one entry point into `layer 1' can be found. Note that
none of the other nodes in `layer 1' could have been marked orange at this
point, since all of the orange `layer 2' nodes, except the one, have
transmission
degree 2 (are connected to two red nodes).

\begin{figure}[htb]
	\noindent
	\begin{subfigure}{.5\textwidth}
		\centering
		\includegraphics[width=1.0\linewidth]{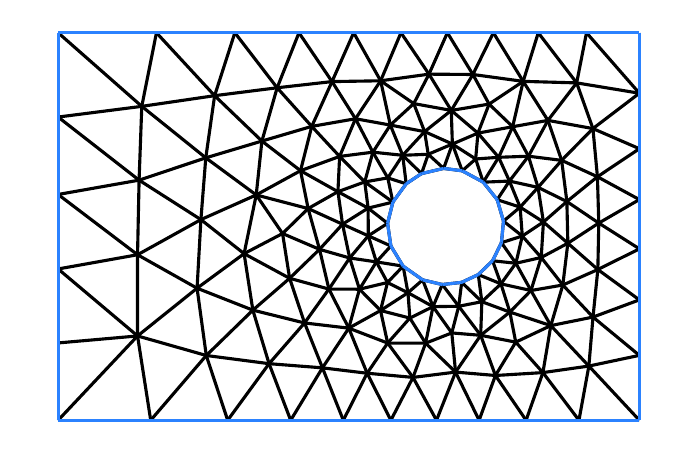}
		\caption{Mesh with boundary}
		\label{fig:ex_hole_mesh}
	\end{subfigure}%
	\begin{subfigure}{.5\textwidth}
		\centering
		\includegraphics[width=1.0\linewidth]{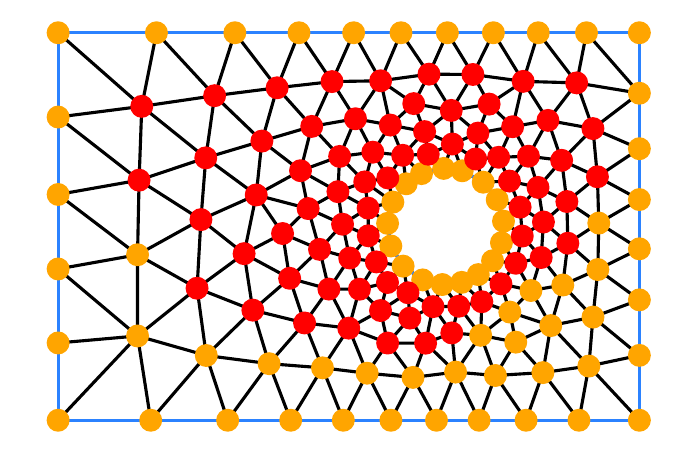}
		\caption{Step 20}
		\label{fig:ex_hole_20}
	\end{subfigure}
	\\[4mm]
	\begin{subfigure}{.5\textwidth}
		\centering
		\includegraphics[width=1.0\linewidth]{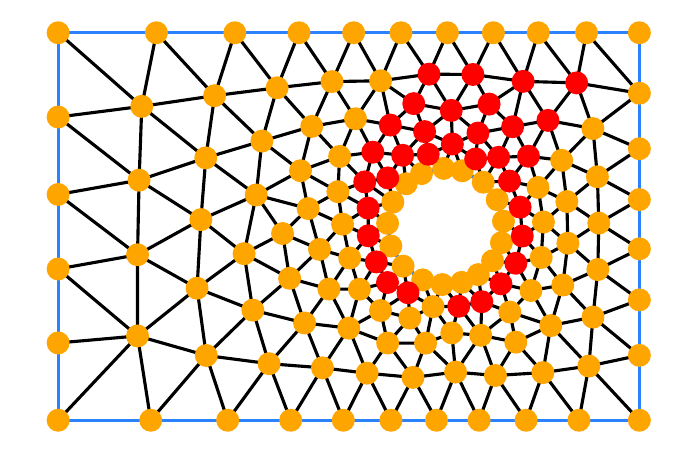}
		\caption{Step 62}
		\label{fig:ex_hole_62}
	\end{subfigure}%
	\begin{subfigure}{.5\textwidth}
		\centering
		\includegraphics[width=1.0\linewidth]{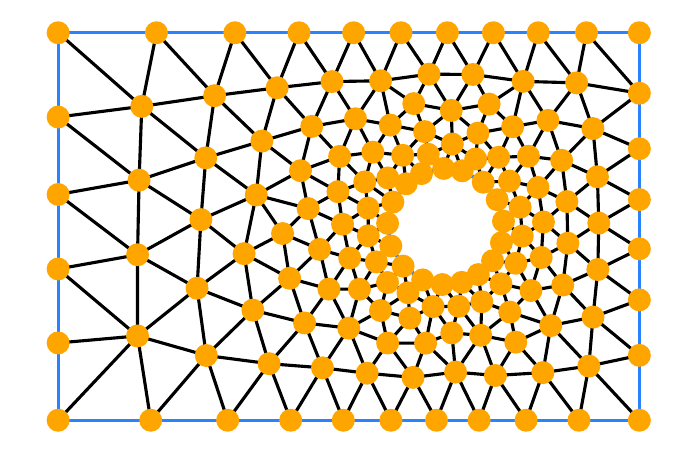}
		\caption{Step 95: final state}
		\label{fig:ex_hole_95}
	\end{subfigure}
	\caption{Example of a non-convex geometry with one hole and the evolution
	of Algorithm MOTZ (with result \texttt{MOTZ\_result = certified}).}
	\label{fig:mesh_hole}
\end{figure}

This suggests how a finite element mesh would need to look like in order for
MOTZ to give the result \texttt{MOTZ\_result = critical}. If the
mesh in Figure \ref{fig:ex_hole_mesh} was such that each `layer 1' node is
connected to exactly two nodes in `layer 0' and `layer 2', the algorithm would
not find any entry point into `layer 1' and would return \texttt{MOTZ\_result =
critical}.
However, in practice we could not produce such a mesh with standard mesh
generation tools due to the non-optimal quality of the desired mesh.\newline

We applied the Algorithm MOTZ to various geometries and mesh configurations
(sharp corners, complex geometries, strong local refinements). None of the
tested meshes that were produced by a standard mesh generation algorithm
(such as \cite{Schloemer2021,Schloemer2021a}),
actually led to the output \texttt{MOTZ\_result = critical} of the Algorithm
MOTZ.

\subsection{A mesh modification algorithm and numerical experiments}

\label{SecMeshEnrichment}

Even though the algorithm seems to return the output
\texttt{MOTZ\_result = certified} for most
shape regular meshes, one can construct examples where it returns a critical
result. In this section,
	we are in particular interested in the case where this is due to the
	output \texttt{MOTZ\_trans = false}, i.e. where no more edges which
	satisfy condition (a) of Lemma \ref{lem:lducp} could be found.
In these cases we need to modify the finite element mesh so that the
corresponding Galerkin discretization has a unique
solution. We propose the following
three simple mesh modification strategies, that can lead to a passing of the
checking algorithm:

\begin{itemize}
\item Re-building of the whole mesh with slightly modified mesh parameters.

\item Local refinement of the mesh across the interface of $\mathcal{N}%
_{\operatorname*{dof}}$ and $\mathcal{N}\backslash\mathcal{N}%
_{\operatorname*{dof}}$, where $\mathcal{N}_{\operatorname*{dof}}$ is the
result of MOTZ.

\item Application of Algorithm \ref{alg:mesh-refinement} (MOTZ\_flip), which
flips certain edges at the interface of $\mathcal{N}_{\operatorname*{dof}}$
and $\mathcal{N}\backslash\mathcal{N}_{\operatorname*{dof}}$.
\end{itemize}

If MOTZ returns \texttt{MOTZ\_trans = false} (together with the non-empty
set
$\mathcal{N}_{\operatorname*{dof}}$), the algorithm was not able to find any
more nodes that satisfy condition (a) of Lemma
\ref{lem:lducp}. The idea
behind all three strategies above is to alter existing or create new entry
points for the algorithm into the remaining set $\mathcal{N}%
_{\operatorname*{dof}}$. Re-building of the whole mesh with slightly different
parameters or a different meshing algorithm is a simple way of altering
triangles and edges which in turn might break up the constellations that lead
to the output \texttt{MOTZ\_result = critical}. If this is not possible or not
successful, a more targeted local refinement around a node on the boundary of $\mathcal{N}%
_{\operatorname*{dof}}$ might lead to new entry points and to a passing of the
checking algorithm.\newline A highly targeted approach to create new entry
points with minimal modifications to the original mesh is described in
Algorithm \ref{alg:mesh-refinement}. The idea is to detect those nodes on the
boundary of $\mathcal{N}_{\operatorname*{dof}}$ that have exactly two
connected nodes in $\mathcal{N}_{\operatorname*{test}}$, which in turn have
exactly one common node in $\mathcal{N}_{\operatorname*{test}}$ (see Figure
\ref{fig:angleflip}). Flipping the interior edge in this scenario, i.e.,
replacing $[z_{1},z_{2}]$ with $[z,\tilde{z}]$, will then increase the
transmission degree of $\tilde{z}$ by one. Since this will typically mean that
$\deg\left(  \tilde{z},\mathcal{N}_{\operatorname*{test}}\right)  =1$, this
mesh modification will create a new
entry point for MOTZ into $\mathcal{N}_{\operatorname*{dof}}$. Often, the
constellation described in Figure \ref{fig:angleflip} can be found multiple
times in a mesh. In Algorithm 3 we propose to compute a mesh quality score for
each potential edge flip (e.g. based on minimal angles of the resulting triangles). MOTZ is then rerun for the
modified mesh with the highest quality score.

\begin{figure}
	\begin{center}
		\begin{tikzpicture}[scale = 2]
		\coordinate (right) at (0,0);
		\coordinate (top) at (-1,0.5);
		\coordinate (bottom) at (-1,-0.5);
		\coordinate (left) at (-1.5,0);

		\coordinate (right2) at (0+3,0);
		\coordinate (top2) at (-1+3,0.5);
		\coordinate (bottom2) at (-1+3,-0.5);
		\coordinate (left2) at (-1.5+3,0);

		\draw (right) -- (top)  -- (bottom) -- (right);
		\draw (top)  -- (left) -- (bottom);

		\draw (right2) -- (top2) -- (left2)  -- (bottom2) -- (right2);
		\draw (right2)  -- (left2);

		\fill[draw = red, fill=red] plot[mark=triangle*] (right);
		\fill[draw = red, fill=red] plot[mark=triangle*] (right2);

		\draw[draw = orange, fill = orange] (left)  circle[radius = 2pt];
		\draw[draw = orange, fill = orange] (top)  circle[radius = 2pt];
		\draw[draw = orange, fill = orange] (bottom) circle[radius = 2pt];
		\draw[draw = orange, fill = orange] (left2)  circle[radius = 2pt];
		\draw[draw = orange, fill = orange] (top2)  circle[radius = 2pt];
		\draw[draw = orange, fill = orange] (bottom2) circle[radius = 2pt];

		\draw (top) node[anchor = south west] {$z_1$};
		\draw (top2) node[anchor = south west] {$z_1$};
		\draw (bottom) node[anchor = north west] {$z_2$};
		\draw (bottom2) node[anchor = north west] {$z_2$};
		\draw (right) node[anchor = north west] {$z$};
		\draw (left) node[anchor = north east] {$\tilde{z}$};
		\draw (right2) node[anchor = north west] {$z$};
		\draw (left2) node[anchor = north east] {$\tilde{z}$};
		\draw [->] (0.4,0) -- (1,0);
		\end{tikzpicture}
	\end{center}
	\caption{Replacing $[z_1,z_2]$ with $[z,\tilde{z}]$ increases $\deg\left(
	\tilde{z},\Nout\right) $ by 1 ( \protect\tikz\protect\draw[draw=orange,
	fill=orange]  (0,0) circle[radius = 3pt]; $\in\Nout $, \trianglecolored{red} $\in\Nin $). }
	\label{fig:angleflip}
\end{figure}

Algorithm \ref{alg:mesh-refinement} makes use of the following definitions:

\begin{enumerate}
[(i)]

\item The neighbouring nodes of a node $z\in\mathcal{N}$ are denoted by
\[
\mathcal{N}_{\operatorname{neighbours}}(z)= \{z^{\prime}\in\mathcal{N}%
\backslash\{z\} \mid[z,z^{\prime}] \in\mathcal{E}_{\Omega}\}.
\]

\item The set of edges $\mathcal{E}_{\Omega}$, where edge $[z_{1},z_{2}]$ has
been replaced by edge $[\tilde{z}_{1},\tilde{z}_{2}]$ is denoted by
\[
\mathcal{E}_{\Omega,[z_{1},z_{2}]\rightarrow\lbrack\tilde{z}_{1},\tilde{z}%
_{2}]}:=\mathcal{E}_{\Omega}\backslash\{[z_{1},z_{2}]\}\cup\{[\tilde{z}%
_{1},\tilde{z}_{2}]\}.
\]

\end{enumerate}
\begin{algorithm}[H]
	\caption{\alg\_flip}\label{alg:mesh-refinement}
	\noindent
	\hspace*{\algorithmicindent} \textbf{Input:} MOTZ\_result, \(\Nin\), \(\Nout\), \(\mathcal{N}\), \(\mathcal{E}_\Omega\) \\
	\hspace*{\algorithmicindent} \textbf{Output:} updated MOTZ\_result,
	\(\Nin\), \(\Nout\), \(\mathcal{N}\), \(\mathcal{E}_\Omega\)
	\begin{algorithmic}[1]
		\WHILE{MOTZ\_result = critical}
		\STATE Set $\mathcal{P} = \{\}$, which will hold possible modified
		meshes, together with a quality score for each mesh.
		\FORALL{\(z\in \partial\Nin\)}
		\STATE Let $z$ have exactly two neighbours in $\Nout$, denoted by $z_1, z_2 \in
		\mathcal{N}_{\operatorname{neighbours}}(z) \cap
		\Nout$
		\IF{$\mathcal{N}_{\operatorname{neighbours}}(z_1)
		\cap
			\mathcal{N}_{\operatorname{neighbours}}(z_2) \cap \Nout =
			\{\tilde{z}\}$}
		\STATE  Compute a quality score $q$ for the mesh associated with
		$\mathcal{E}_{\Omega, [z_1,z_2] \rightarrow [z,\tilde{z}]} $
		\STATE Set $\mathcal{P} = \mathcal{P} \cup \{ ({\mathcal{E}_{\Omega,
		[z_1,z_2] \rightarrow [z,\tilde{z}]} , q}) \}$
		\ENDIF
		\ENDFOR
		\IF{$\mathcal{P} = \{\}$}
		\STATE {\bf STOP}
		\ELSE
		\STATE Let $\mathcal{E}^{\max}_{\Omega, [z_1,z_2] \rightarrow
		[z,\tilde{z}]} $ be the set of edges in $\mathcal{P}$ with the highest
		quality score.
		\STATE (MOTZ\_result, $\Nin$, $\Nout$, $\mathcal{N}$, $\mathcal{E}_\Omega$) =
		\alg(\(\Nout\), $\mathcal{N}$, $\mathcal{E}^{\max}_{\Omega,
			[z_1,z_2] \rightarrow [z,\tilde{z}]}$)
		\ENDIF
		\ENDWHILE
	\end{algorithmic}
\end{algorithm}

We emphasize that the strategies described above are heuristic, e.g., only the
case of two neighbours is considered in Algorithm \ref{alg:mesh-refinement}: line 5.
However, we
expect that they are successful in the vast majority of cases where MOTZ
returns \texttt{MOTZ\_trans = false}.\newline

Figure \ref{fig:mesh_negative.pdf} shows an example of a mesh, where the
checking algorithm returns \texttt{MOTZ\_result = critical}, together with a
non-empty set
$\mathcal{N}_{\operatorname*{dof}}$ consisting of four points (see Figure
\ref{fig:mesh_negative_false} ). We apply Algorithm \ref{alg:mesh-refinement}
MOTZ\_flip, which in this
case will detect four edges that can potentially be flipped. As a simple
quality score we measure the minimal angle for each triangle in the mesh. Due
to the symmetry of the mesh, the quality score for each potential modification
suggested by MOTZ\_flip coincides. Therefore, there is no preference
concerning the choice of the edge that will be flipped in this case. Figure
\ref{fig:mesh_negative_improved} shows the modified mesh. Indeed, this
modification is sufficient in order for MOTZ to return \texttt{MOTZ\_result =
certified}.

\begin{figure}[htb]
	\noindent
	\begin{subfigure}[t]{.5\textwidth}
		\centering
		\includegraphics[width=1.0\linewidth]{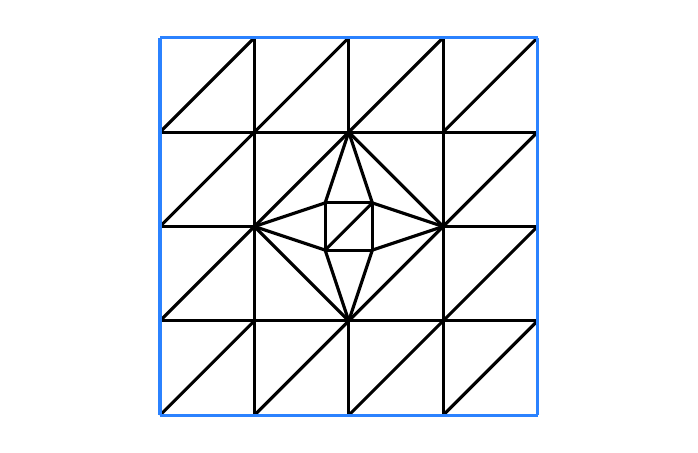}
		\caption{Original mesh}
		\label{fig:mesh_negative.pdf}
	\end{subfigure}%
	\begin{subfigure}[t]{.5\textwidth}
		\centering
		\includegraphics[width=1.0\linewidth]{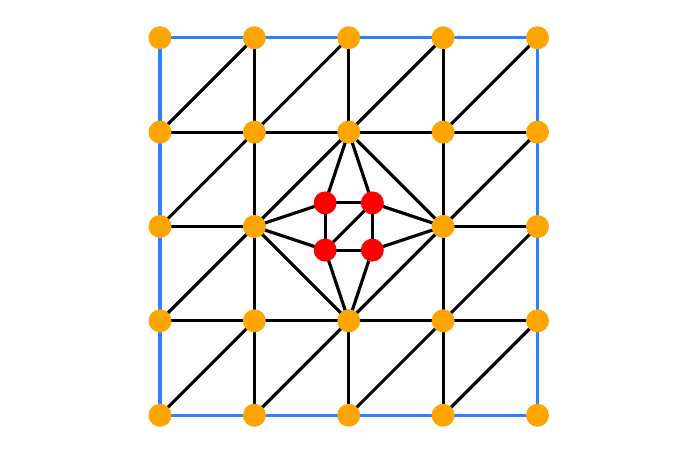}
		\caption{MOTZ returns \texttt{MOTZ\_result
		= critical}.}
		\label{fig:mesh_negative_false}
	\end{subfigure}
	\\[4mm]
	\begin{subfigure}[t]{.5\textwidth}
		\centering
		\includegraphics[width=1.0\linewidth]{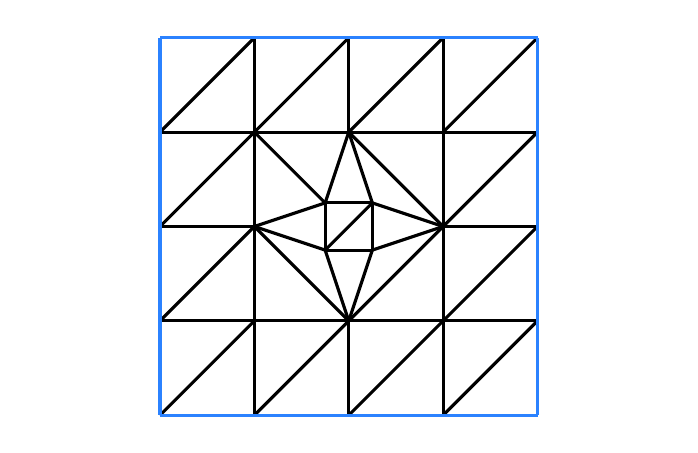}
		\caption{Modified mesh}
		\label{fig:mesh_negative_improved}
	\end{subfigure}%
	\begin{subfigure}[t]{.5\textwidth}
		\centering
		\includegraphics[width=1.0\linewidth]{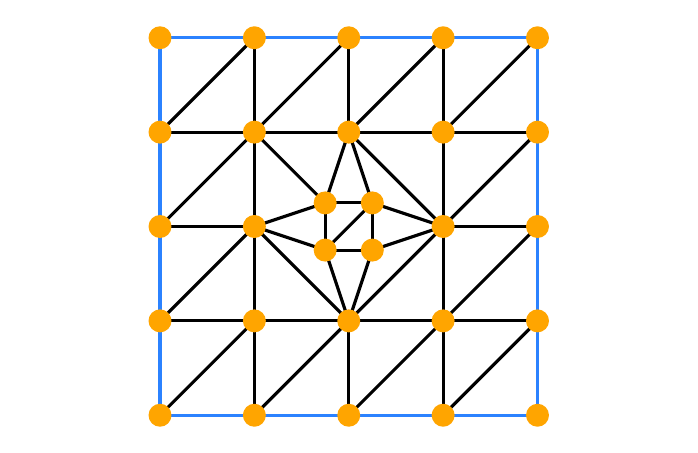}
		\caption{MOTZ returns \texttt{MOTZ\_result =
		certified}.}
		\label{fig:mesh_negative_true}
	\end{subfigure}
	\caption{ (a) Example of a mesh for which \alg\ returns
		\texttt{MOTZ\_result = critical}. (c) Modified mesh as a result of
		Algorithm
		\ref{alg:mesh-refinement} which leads to a passing checking algorithm.}
	\label{fig:mesh_negative}
\end{figure}

Below we consider the impact of mesh modification via
Algorithm \ref{alg:mesh-refinement} MOTZ\_flip by numerically calculating the reciprocal
of the discrete inf-sup constant $\beta_k$ given by
\begin{equation*}
	\beta_k :=
	\inf_{u_h \in S_{\mathcal{T}}^{1}} \,
	\sup_{v_h \in S_{\mathcal{T}}^{1}} \,
	\frac{a_k(u_h, v_h)}{\|u_h\|_{1,k,\Omega}\|v_h\|_{1,k,\Omega}},
\end{equation*}
for a modification of the mesh
considered in Section~\ref{subsection:singular_2d_example} for $\alpha = \tfrac{1}{2}$.
The $k$-weighted natural norm
$\| \cdot \|_{1,k,\Omega}$ on $H^1(\Omega)$ is given by
$\| u \|_{1,k,\Omega} := \left(\| \nabla u \|_{L^2(\Omega)}^2 + k^2 \| u
\|_{L^2(\Omega)}^2\right)^{1/2}$
for $u \in H^1(\Omega)$.
The discrete inf-sup constant $\beta_k$ can be numerically calculated via
a generalized eigenvalue problem.

For the mesh
considered in Section~\ref{subsection:singular_2d_example} with $\alpha = \tfrac{1}{2}$,
we find that $k=6$ results in a singular system matrix, see equation~\eqref{eq:critical_wavenumber}.
We inscribe the mesh considered in Section~\ref{subsection:singular_2d_example} into another quadrilateral,
see Figure~\ref{fig:singularmesh},
in order to apply Algorithm \ref{alg:mesh-refinement} MOTZ\_flip.
For the left mesh in Figure~\ref{fig:singularmesh} MOTZ returns \texttt{MOTZ\_result = critical}.
The numerical results are visualized in Figure~\ref{fig:singular_numerics}.
We observe that for $k=6$ the original mesh results in a singular system matrix,
while the modified mesh from MOTZ\_flip results in a regular one.

\begin{figure}[h]
	\begin{subfigure}[t]{.5\textwidth}
		\centering
		\includegraphics[width=0.8\linewidth]{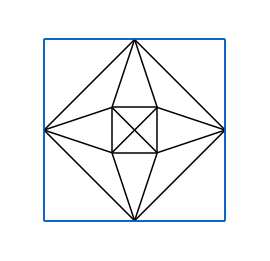}
		\label{fig:singularmesh1}
	\end{subfigure}%
	\begin{subfigure}[t]{.5\textwidth}
		\centering
		\includegraphics[width=0.8\linewidth]{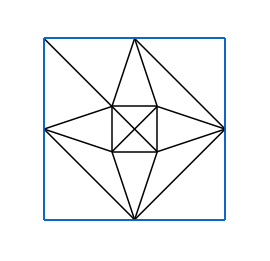}
		\label{fig:singularmesh2}
	\end{subfigure}
	\caption{For the mesh on the left, MOTZ returns \texttt{MOTZ\_result = critical}. The mesh on the right is the result of MOTZ\_flip.}
	\label{fig:singularmesh}
\end{figure}

\begin{figure}[h]
    \begin{center}
        \input{graphics/singular_numerics.pgf}
    \end{center}
    \caption{ Plot of $k$ against $\tfrac{1}{\beta_k}$ for the meshes as in Figure~\ref{fig:singularmesh}.}
		\label{fig:singular_numerics}
\end{figure}

The algorithms MOTZ and MOTZ\_flip have been implemented in Python. The code is
available via \texttt{https://github.com/alexander-veit/MOTZ}.

\section*{Acknowledgements}
We thank Victorita Dolean, University of Strathclyde, UK,
	for valuable discussions on the topic of the paper. The first author is
	grateful for the financial support by the Austrian Science
	Fund (FWF) through the doctoral school \textit{Dissipation and dispersion
	in nonlinear PDEs} (grant W1245). The third author gratefully
acknowledges the support by the Swiss National Science Foundation under grant
no. 172803.


\def\cprime{$'$}

\end{document}